\documentclass[hidelinks,onefignum,onetabnum]{siamart250211}

\usepackage{lipsum}
\usepackage{amsfonts}
\usepackage{graphicx}
\usepackage{epstopdf}
\usepackage{algorithmic}

\usepackage{float}
\usepackage{subfig}
\usepackage{cleveref}
\usepackage{bm}
\usepackage{tabularx}
\usepackage{color}
\usepackage{multirow}

\usepackage{algorithm}
\usepackage{threeparttable}
\usepackage{listings}
\usepackage{threeparttable}
\usepackage{listings}
\ifpdf
  \DeclareGraphicsExtensions{.eps,.pdf,.png,.jpg}
\else
  \DeclareGraphicsExtensions{.eps}
\fi

\newsiamremark{remark}{Remark}
\newsiamremark{assumption}{Assumption}
\newsiamremark{hypothesis}{Hypothesis}
\crefname{hypothesis}{Hypothesis}{Hypotheses}
\newsiamthm{claim}{Claim}
\newsiamremark{fact}{Fact}
\crefname{fact}{Fact}{Facts}

\title{Efficient Solution of Generalized Sylvester Equations via Preconditioned Alternating Anderson Acceleration \thanks{Submitted to the editors DATE.
\funding{This work was funded by the National Natural Science Foundation of China under grant Nos. 12571398, 12361080, 12401497, Jiangxi Provincial Natural Science Foundation (No. 20232BAB211007) and National Key R \&D Program of China (No. 2023YFA1011303).}}}

\def\T{{\rm T}}
\def\F{{\rm F}}
\def\g{\gamma}

\def\hC{\widehat{C}}
\def\hN{\widehat{N}}
\def\hM{\widehat{M}}

\def\tg{\widetilde{\gamma}}

\def\calL{\mathcal{L}}
\def\calP{\mathcal{P}}
\def\calR{\mathcal{R}}
\def\calF{\mathcal{F}}

\def\hcalL{\widehat{\mathcal{L}}}
\def\hPi{\widehat{\Pi}}
\def\bbR{\mathbb{R}}

\def\vec{\text{vec}}

  \author{Hongjia Chen\thanks{Department of Mathematics, Nanchang University, 999 Xuefu Road, Nanchang 330031, China (\email{chenhongjia@ncu.edu.cn}).}
\and Chun-Hua Zhang\thanks{School of Mathematics and Information Science, Nanchang Hangkong
University, Nanchang 330063, China (\email{chzringlang@163.com}).}
\and Zhongming Teng\thanks{College of Computer and Information Science, Fujian Agriculture and Forestry University, Fuzhou 350002, China
(\email{peter979@163.com}).}
\and Lei Du\thanks{School of Mathematical Sciences, Dalian University of Technology, Dalian 116024, China (\email{dulei@dlut.edu.cn}).
}}

\usepackage{amsopn}

\begin{document}

\maketitle

\begin{abstract}
This paper considers the numerical solution of generalized Sylvester matrix equations, which arise in many scientific and engineering applications but remain challenging to solve efficiently, particularly when the coefficient matrices are general and the spectral radius of the associated operator is large but not greater than $1$. We propose a new iterative method, termed preconditioned-alternating Anderson acceleration (P-aAA), which combines a matrix-oriented variant of Anderson acceleration (AA) with a novel preconditioning strategy. The method alternates between preconditioned fixed-point iterations and Anderson acceleration updates, thereby reducing both computational cost and iteration count. A key contribution is the development of an efficient preconditioning operator based on a first-order Neumann series approximation, which avoids expensive operator inversions while enhancing convergence. We theoretically prove that the proposed preconditioning operator accelerates the convergence rate without increasing the overall computational complexity. Extensive numerical experiments further demonstrate that the proposed approach consistently outperforms existing state-of-the-art methods for both medium- and large-scale problems, achieving substantial reductions in computation time and iteration number.
\end{abstract}

\begin{keywords}
Generalized Sylvester matrix equations, Anderson acceleration, Preconditioning, Neumann series
\end{keywords}

\begin{MSCcodes}
65F45, 65F25, 65F99
\end{MSCcodes}

\section{Introduction}
\label{sec:intro}
In this paper, we consider the numerical solution of generalized Sylvester matrix equations of the form,
\begin{equation}
A X + X B^{\T} + \mathop{\sum}\limits_{i=1}^{m} N_{i} X M_{i}^{\T}  = C, \label{eq:gensylv}
\end{equation}
or, equivalently,
\begin{equation}
\calL(X) - \Pi(X) = C, \label{eq:oper_sylv}
\end{equation}
where the matrix Sylvester operator 
\begin{equation*}
\calL(X):= AX + XB^{\T}, 
\end{equation*}
and the matrix operator
\begin{equation*}
\Pi(X):= - \mathop{\sum}\limits_{i=1}^{m} N_{i} X M_{i}^{\T}.
\end{equation*}
In the equations above, $X\in\bbR^{n\times n}$, $A,B\in\bbR^{n\times n}$ are the nonsingular matrices, $C, N_i,M_i\in\bbR^{n\times n}$ for $i=1,2,\dots,m$.

We assume the spectral $A$ and $-B$ are disjoint \cite{sim2016}, which implies 
that the operator $\calL$ is invertible. Moreover, we assume that $\rho(\calL^{-1}\Pi)<1$, where $\rho$ denotes the spectral radius. 
The assumption on the spectral radius implies that \eqref{eq:gensylv} has a unique solution. 

The generalized Sylvester equation finds broad applications in scientific computing and control theory. For instance, considering a special case of the generalized Lyapunov equation \eqref{eq:gensylv}, where $B=A$, $M_i=N_i$. This type of equation arises primarily from model order reduction of bilinear and stochastic systems \cite{benner2011,damm2008}. Additionally, problems such as the discretization of partial differential equations can also be formulated as generalized Sylvester equations; see references \cite{palitta2016,sim2016}.

Theoretically, solving a generalized Sylvester matrix equation \eqref{eq:gensylv} is equivalent to solving a linear system of size
$n^2 \times n^2$,
\begin{equation}
\mathcal{A}\,\vec(X) = 
\left( B\otimes I_n + I_n \otimes A  + \mathop{\sum}\limits_{i=1}^{m} M_i\otimes N_i\right )\vec(X) = \vec(C),
\label{eq:linear}
\end{equation}
where the symbol $\otimes$ denotes the Kronecker product, and $\vec(\cdot)$ converts a matrix into a column vector by stacking its columns. If $\mathcal{A}\in\mathbb{R}^{n^2 \times n^2}$ is invertible, the equation \eqref{eq:linear} has a unique solution $X$. 
When the dimension $n$ is small, the linear system \eqref{eq:linear} can be solved by direct methods, such as Gaussian elimination. However, for very large $n$, direct methods become impractical due to their high computational complexity of $O(n^6)$ and memory storage requirement of $O(n^4)$, making them unsuitable for large-scale problems. In such cases, iterative algorithms for solving the linear system can be employed, as discussed in references \cite{demmel1997,saad2003}. 
Over the past two decades, another class of methods has been developed that circumvents the use of the Kronecker product formulation. These methods construct iterative schemes for solving \eqref{eq:gensylv} directly by leveraging the efficient, matrix-oriented implementation of standard vector iterative methods. For example, the Bilinear ADI (BilADI) method \cite{benner2013} extends the low-rank ADI algorithm \cite{benner2009} from standard Lyapunov equations to generalized Lyapunov equations. A greedy low-rank approach was introduced in \cite{kressner2015} to solve the generalized Lyapunov equations. 
One well-established class of methods for solving matrix equations defined by large, sparse matrices is based on projection and is commonly referred to as projection methods \cite{breiten2019,jarlebring2018,shank2016}. 
Specifically, a rational Krylov subspace approach for large-scale generalized Lyapunov equations was introduced in \cite{breiten2019}. For solving generalized Lyapunov and Sylvester equations, low-rank commuting Krylov subspace (LRCK) method has also been developed, as detailed in \cite{jarlebring2018}. 
The generalized Lyapunov extended Krylov (GLEK) method, proposed in \cite{shank2016}, is a non-stationary iterative algorithm that incorporates extended Krylov subspace method to solve large-scale generalized Lyapunov equations.

More recently, various techniques for solving linear matrix equations have been studied, such as preconditioning \cite{voet2025}, Riemannian optimization \cite{bioli2025}, the truncated conjugate gradient (CG) method \cite{sim2023}, a subspace-CG method \cite{palitta2025}, and the Sherman–Morrison–Woodbury (SMW) formula \cite{hao2021}.

Efficiently solving generalized Sylvester equations remains a highly challenging problem in numerical linear algebra. While many classical algorithms have been developed for generalized Lyapunov equations, research on generalized Sylvester equations has mainly focused on special cases, such as those that are symmetric positive definite, or where the matrices $N_i$ and $M_i$ are very sparse and low-rank. In contrast, relatively few studies have addressed more general forms of the generalized Sylvester equation, for example, cases in which $N_i$ and $M_i$ are arbitrary matrices and the spectral radius of the associated operator $\rho(\calL^{-1}\Pi)$ is slightly less than $1$.

In this paper, we propose a novel alternating acceleration framework for solving generalized Sylvester matrix equations. The core idea of the proposed algorithm is to alternately apply Anderson acceleration (AA) and a newly developed preconditioning technique to improve both convergence rate and computational efficiency. We first derive a matrix-oriented version of Anderson acceleration. Subsequently, we introduce a new preconditioning operator $\mathcal{P}^{-1}$, which is applied to the residual
\[
\calR(X) = \calL(X) - \Pi(X) - C, 
\]
that is, $\mathcal{P}^{-1}\mathcal{R}(X)$. The preconditioning operator $\mathcal{P}^{-1}$ is constructed using a first-order truncation of the Neumann series expansion of $(\calL - \Pi)^{-1}$, providing an accurate and computationally inexpensive approximation of the inverse operator. This design accelerates convergence without increasing the overall computational cost. Finally, by combining Anderson acceleration with the preconditioning operator, we develop an alternating iteration scheme, referred to as the preconditioned-alternating Anderson Acceleration (P-aAA) method, which leverages the complementary strengths of preconditioning and acceleration.

The remainder of this paper is organized as follows. In Section \ref{sec:AA}, we briefly introduce the Anderson acceleration (AA) method and then propose its matrix-oriented variant. In Section 3, we present the main contribution of this work—the development of the P-aAA method. This novel approach integrates a newly constructed preconditioning technique with a matrix-oriented form of Anderson acceleration. We provide a theoretical proof showing that the proposed preconditioning accelerates convergence while incurring no additional computational cost in practice. 
In Section 4, numerical experiments demonstrate that our algorithm exhibits strong competitiveness compared with state-of-the-art methods, achieving improvements in both computational time and iteration count. Concluding remarks are given in Section 5.

\textbf{Notation}. $\bbR^{n\times n}$ is the set of $n$-by-$n$ real matrices, $\bbR^{n} = \bbR^{n\times 1}$ and 
$\bbR = \bbR^{1}$. $I_n \in\bbR^{n\times n}$ is the $n$-by-$n$ identity matrix. Given a matrix/vector $B$, $B^{\T}$
denotes its transpose respectively. $\|B\|_2$ and $\|B\|_{\F}$ are the 2-norm and Frobenius norm of $B$, respectively. Other notations will be explained at their first appearances.

\section{Anderson acceleration and its variant}\label{sec:AA}
In this section, we briefly review the Anderson acceleration (AA) method and then propose its matrix-oriented variant for splitting-type iterative schemes applied to generalized Sylvester matrix equations. The AA method, originally introduced by Anderson \cite{anderson1965}, was designed to accelerate the convergence of general fixed-point iterations for solving systems of nonlinear equations. It can be effectively applied to both linear and nonlinear problems. The basic idea of AA is to combine information from several previous iterates to generate an improved approximation. Under certain conditions, when all past iteration information is utilized, AA applied to linear systems can be shown to generate the same iterates as the GMRES method; see \cite{potra2013,walker2011}.
Recently, extensive research has been devoted to the convergence analysis and theoretical properties of AA \cite{potra2013,sterck2021,toth2015,walker2011}, and comprehensive reviews can be found in \cite{anderson2019,saad2025}. Furthermore, it has been successfully applied across a variety of fields, including computational mathematics and scientific computing \cite{an2017,ganine2013,mai2020,wei2021}.

\subsection{Anderson acceleration}
Given a function $g:\bbR^{n}\rightarrow\bbR^{n}$, 
consider the equation
\begin{equation*}
x = g(x).
\end{equation*} 
A solution $x^{*}$ satisfying $g(x^{*}) = x^{*}$ is called a fixed point of $g$. Here, $g$ is referred to as the fixed point function. A classical approach to solve the problem is to employ a fixed-point iteration scheme, that is, given
an initial guess $x_0$ for the solution, to compute the sequence
\begin{equation}
x_{i+1} = g(x_i), \quad  i=0,1,\dots,k \label{eq:fix}
\end{equation}
until some convergence criterion is met. 

Define the residual $f(x) = g(x) - x$, and let $f_i = f(x_i)$ and $g_i = g(x_i)$, where $x_i$ denotes
the sequence of iterates generated by \eqref{eq:fix}.
Define the differences
\begin{align*}
\Delta x_i &= x_{i+1} - x_i,\\ 
\Delta f_i &= f_{i+1} - f_i, \\
\Delta g_i &= g_{i+1}- g_i,
\end{align*} and form the matrices 
\begin{align*}
\mathcal{X}_k &= (\Delta x_{k-m_k},\dots,\Delta x_{k-1}), \\
\mathcal{F}_k &= (\Delta f_{k-m_k},\dots,\Delta f_{k-1}), \\
\mathcal{G}_k &= (\Delta g_{k-m_k},\dots,\Delta g_{k-1}),
\end{align*}
for $i = 0,1,\dots, k-1$.
The general form of Anderson acceleration is presented in \Cref{alg:AA}.

\begin{algorithm}[t]
\caption{Anderson acceleration \label{alg:AA}}
\begin{algorithmic}[1]
\STATE Given $x_0$ and $m\geq 1$.
\STATE Set $x_1 = g(x_0)$.
\FOR {$k=1,2,\dots$}
\STATE Set $m_k = \min\{m,k\}$.
\STATE Set $f_k = g(x_k) - x_k$ and compute $\mathcal{X}_k,\mathcal{F}_k$, and $\mathcal{G}_k$.
\STATE Determine $\gamma^{(k)} = (\gamma_0^{(k)},\dots,\gamma_{m_k -1}^{(k)})^{\T}$ by solving
              \begin{equation*}
              \mathop{\min}\limits_{\gamma = (\gamma_0,\dots,\gamma_{m_{k} - 1})^{\T}} \|f_k - \mathcal{F}_k \gamma\|_2.
              \end{equation*}
\STATE Set $x_{k+1} = (1-\beta_k)(x_k - \mathcal{X}_k\gamma^{(k)}) + \beta_k(g(x_k) - \mathcal{G}_k\gamma^{(k)})$.

\ENDFOR
\end{algorithmic}
\end{algorithm}

In \Cref{alg:AA}, $m_k$ denotes the Anderson depth and is bounded above by a maximum depth $m$. The parameter $\beta_k \in (0,1]$, referred to as the damping parameter, is typically chosen heuristically.

\subsection{The matrix-oriented Anderson acceleration}
Now, we start by writing the matrix-oriented Anderson acceleration. Following \eqref{eq:oper_sylv}, given an initial $X_0 = 0$, we consider the fixed-point iteration of the form
\begin{equation*}
\calL(X_{k+1}) =  \Pi(X_k) +  C, \quad k = 0,1,2,\dots.
\end{equation*}
Then we have 
\begin{equation}
X_{k+1} = \calL^{-1}(\Pi(X_k)) + \calL^{-1}(C). \label{eq:Xk}
\end{equation}
Solving \eqref{eq:Xk} can be equivalently reformulated as solving a standard Sylvester matrix equation,
\begin{align}
A X_{k+1} + X_{k+1} B^{\T} = - \mathop{\sum}\limits_{i=1}^{m} N_{i} X_k M_{i}^{\T}  + C.\label{eq:standSylv}
\end{align} 
The Bartels–Stewart (BS) algorithm \cite{bar1972} can be applied to solve \eqref{eq:standSylv} for $X_{k+1}$. 
Since the BS algorithm requires the Schur decomposition of matrices $A$ and $B$, i.e.,
\begin{equation*}
A = Q_A H_A Q_A^{\T},\quad B = Q_B H_B Q_B^{\T}, 
\end{equation*}
where $Q_A$ and $Q_B$ are $n\times n$ orthogonal matrices, and $H_A$ and $H_B$ are $n\times n$ real block-triangular matrices. We first transform \eqref{eq:gensylv} into the following form to avoid repeated Schur decompositions of $A$ and $B$ at each iteration,
\begin{equation}
H_AY + Y H_B^{\T} + \mathop{\sum}\limits_{i=1}^m \hN_i Y {\hM}_i^{\T} = \hC, \label{eq:Hsylv}
\end{equation} 
where $\hC = Q_A^{\T}CQ_B$, $\hN_i = Q_A^{\T}N_iQ_A$, and $\hM_i = Q_B^{\T}M_iQ_B$. 

We define the operators
\begin{equation*} 
\hcalL(Y) :=  H_A Y + Y H_B^{\T}
\end{equation*}
and 
\begin{equation*}
\hPi(Y) :=  - \mathop{\sum}\limits_{i=1}^{m} \hN_{i} Y \hM_{i}^{\T}.
\end{equation*}
Similar to \eqref{eq:Xk}, the fixed-point iteration of equation \eqref{eq:Hsylv} is given by 
\begin{equation}
Y_{k+1} = G(Y_k) = \hcalL^{-1}(\hPi(Y_k)) + \hcalL^{-1}(\hC). \label{eq:Yk}
\end{equation}
Let $G_k =  G(Y_k)$ and residual $F_k = G_k - Y_k$. 
Set the difference
\begin{align*}
\Delta Y_k &= Y_{k+1} - Y_k,\\
\Delta F_k &= F_{k+1} - F_{k},\\
\Delta G_k &= G_{k+1} - G_k,
\end{align*}
and obtain
\begin{align*}
\mathcal{Y}_k &= (\Delta Y_{k-m_k},\dots,\Delta Y_{k-1})\in\bbR^{n\times (n\cdot m_k)}, \\
\mathcal{F}_k &= (\Delta F_{k-m_k},\dots,\Delta F_{k-1})\in\bbR^{n\times (n\cdot m_k)}, \\
\mathcal{G}_k &= (\Delta G_{k-m_k},\dots,\Delta G_{k-1})\in\bbR^{n\times (n\cdot m_k)}. 
\end{align*}
Now, the least-square problem in matrix-oriented AA algorithm is rewritten as
\begin{align}
\mathop{\min}\limits_{\gamma^{(k)}}\| F_k - \calF_k \gamma^{(k)} \|_{\F} &= 
\mathop{\min}\limits_{\gamma^{(k)}}\| G(Y_k) - Y_k - \calF_k \gamma^{(k)} \|_{\F}  \nonumber\\
&= \mathop{\min}\limits_{\gamma^{(k)}}\| Y_{k+1} - Y_k - \calF_k \gamma^{(k)} \|_{\F} =\mathop{\min}\limits_{\gamma^{(k)}}\| \Delta Y_k - \calF_k \gamma^{(k)} \|_{\F}. \label{eq:least}
\end{align}
The least-square solution of \eqref{eq:least} is then 
\begin{equation*}
\gamma^{(k)} = \calF_k^{\dagger}\Delta Y_k,
\end{equation*}
where ${\dagger}$ is the Moore–Penrose inverse. 
Indeed, if $\Delta Y_k$ is low-rank and $m_k >1$, the matrix $\calF_k$ may become ill-conditioned during the iteration. To avoid this issue, we employ a truncated singular value decomposition (SVD) to obtain a low-rank approximation of $\calF_k$, that is,
\begin{align*}
\calF_k \approx U_r \Sigma_r V_r^{\T}, 
\end{align*}
where $U_r\in\bbR^{n\times r}$, $\Sigma_r\in\bbR^{r\times r}$, and $V_r\in\bbR^{m_k\cdot n \times r}$, with $r\ll n$
denoting the truncating rank. Therefore, the approximation solution $\tg^{(k)}$ of $\g^{(k)}$ is computed by
\begin{equation*}
\tg^{(k)} = V_r\Sigma_r^{-1}U_r^{\T}\Delta Y_k. 
\end{equation*}
The approximation of $Y_k$ is computed by
\begin{equation*} 
Y_{k+1} = (1-\beta_k)(Y_k - \mathcal{Y}_k\tg^{(k)}) + \beta_k(G(Y_k) - \mathcal{G}_k\tg^{(k)}).
\end{equation*}
In this paper, we set $\beta_k = 1$, then
\begin{equation}
Y_{k+1} = G(Y_k) - \mathcal{G}_k\tg^{(k)}.\label{eq:Yk+1}
\end{equation}
Finally, $X_{k+1}$ is computed by $X_{k+1} = Q_A Y_{k+1}Q_B^{\T}$.

In some applications, it may be advantageous to delay the initiation of Anderson acceleration until the underlying fixed-point iteration has been applied for a certain number of initial steps. This strategy, referred to as delayed Anderson acceleration, was introduced in \cite{walker2011a}. The complete matrix-oriented Anderson acceleration procedure is summarized in \Cref{alg:matAA}.

\subsection{Stopping criterion}
To complete the matrix-oriented AA algorithm for the efficient solution of \eqref{eq:gensylv}, it remains to specify the stopping criterion. For the fixed-point iteration \eqref{eq:Yk}, one needs to compute the relative residual 
\begin{equation*}
RRes = \frac{\|\calR(Y_k)\|_{\F}}{\|\hC\|_{\F}} = \frac{\|\hcalL(Y_{k}) -  \hPi(Y_k) -  \hC\|_{\F}}{\|\hC\|_{\F}}.
\end{equation*}
To reduce the computational cost of the residual $\calR(Y_k)$, we utilize the relation derived from \eqref{eq:Yk}:
\begin{equation}
F_k = G(Y_k) - Y_k = Y_{k+1} -Y_k = \hcalL^{-1}(\hPi(Y_k)) + \hcalL^{-1}(\hC) - Y_k. \label{eq:Fk}
\end{equation}
Then we derive the following relation from \eqref{eq:Fk}:
\begin{equation}
\hcalL(Y_{k+1} - Y_k) = - (\hcalL(Y_k) - \hPi(Y_k) - \hC) = - \calR(Y_k). \label{eq:L(dX)}
\end{equation}

From \eqref{eq:L(dX)}, the residual $\calR(Y_k)$ can be approximated by $\hcalL(Y_{k+1} - Y_k)$, since the latter can be evaluated with great computational efficient. Specifically, computing $\hcalL(Y_{k+1} - Y_k)$ requires only $4n^3+2n^2$ flops, whereas directly evaluating $\calR(Y_k)$ incurs $(4m+4) n^3 + (m+2)n^2$ flops due to the summation over $m$ terms in its formulation. This approximation reduces the per-iteration cost significantly, particularly when $m$ is large, while still providing a reliable estimate of the residual for iterative methods.

Based on the above analysis, we define the stopping criterion for the matrix-oriented Anderson acceleration as follows:
\begin{equation*}
RRes = \frac{\|\hcalL(Y_{k+1} - Y_k)\|_\F}{\|\hC\|_\F} = \frac{\|H_A(Y_{k+1} - Y_{k}) - (Y_{k+1} - Y_{k}) H_B^{\T})\|_\F}{\|\hC\|_\F}.
\end{equation*}

\begin{algorithm}[h!]
\caption{The matrix-oriented Anderson acceleration}\label{alg:matAA}
\begin{algorithmic}[1]
\REQUIRE Given $A,B,N_i,M_i, C\in\mathbb{R}^{n\times n}$ for $i = 1,2\dots,m$. A stopping criterion $\epsilon$ and number of stored residual differences $\Delta F_k$: $m_{AA}=0$. Acceleration begins once the iteration count equals $AA_{start}$ (if $AA_{start} > 0$). Maximum number of stored residual differences $\Delta F_k$: $m_{Max}$ 
\ENSURE  $X_{k+1}$ is an approximation to the solution of \eqref{eq:gensylv}.
\vspace{2pt}\hrule\vspace{2pt}
\STATE Set $Y_0 = 0$.
\STATE Set $\Delta F_k = [ ~]$ and $\Delta G_k = [~ ]$.
\STATE Compute the Schur decompositions $A = Q_AH_AQ_A^{\T}$ and $B = Q_BH_BQ_B^{\T}$.
\STATE Compute $\hC= Q_A^{\T}CQ_B$, $\hN_i = Q_A^{\T}N_iQ_A$, and $\hM_i = Q_B^{\T}M_iQ_B$.
\STATE Solve $H_AY_1 + Y_1 H_B^{\T} = -\mathop{\sum}\limits_{i=1}^m \hN_i Y_0 \hM_i^{\T} + \hC$ by
             Bartels-Stewart method \cite{bar1972}. 
\FOR {$k=1,2,\dots,$}
\STATE Compute $g_{cur} = G(Y_k)$ by solving 
             $H_A Y_{k+1} + Y_{k+1}H_B^{\T} = -\mathop{\sum}\limits_{i=1}^m \hN_i Y_{k} \hM_i^{\T} + \hC$.
             \vspace{-6pt}
\STATE Compute $f_{cur} = G(Y_k) - Y_k = Y_{k+1} - Y_k$.
\IF {$RRes< \epsilon$.}
\STATE break 
\ENDIF
\IF {$m_{Max} = 0$ or $k<AA_{start}$}
\STATE $Y_{k+1} = g_{cur}$.
\ELSE
\IF{$k>AA_{start}$}
\IF{$m_{AA} < m_{Max}$}
\STATE $\Delta F_k = [\Delta F_k \quad  f_{cur} - f_{old}]$.
\STATE $\Delta G_k = [\Delta G_k \quad  g_{cur} - g_{old}]$.
\STATE $m_{AA} = m_{AA} +1$.
\ELSE  
\STATE $\Delta F_k = [\Delta F_k(:,n+1:m_{AA}\cdot n) \quad  f_{cur} - f_{old}]$.
\STATE $\Delta G_k = [\Delta G_k(:,n+1:m_{AA}\cdot n) \quad  g_{cur} - g_{old}]$.
\ENDIF
\ENDIF
\ENDIF
\STATE Set $f_{old} = f_{cur}$ and $g_{old} = g_{cur}$.
\IF{$m_{AA} = 0$}
\STATE Set $Y_{k+1} = g_{cur}$.
\ELSE
\STATE Solve the least-square problem $\mathop{\min}\limits_{\g^{(k)}} \|f_{cur} - \Delta F_k \g^{(k)}\|_{\F}$.
\STATE Set $Y_{k+1} = G(Y_k) - \Delta G_k\g^{(k)}$.
\ENDIF
\ENDFOR
\RETURN $X_{k+1} = Q_A Y_{k+1}Q_B^{\T}$.
\end{algorithmic}
\end{algorithm}

\section{A preconditioned alternating Anderson acceleration}
In this section, we propose a novel preconditioning technique and integrate it with the Anderson acceleration, leading to the proposed P-aAA method. 
\subsection{Preconditioning}
In the AA algorithm, the convergence rate of the fixed-point iteration $Y_{k+1} = G(Y_k)$ 
is influenced by the magnitude of the spectral radius. To accelerate this fixed-point iteration, we design a preconditioning operator $\calP^{-1}:\bbR^{n\times n}\rightarrow \bbR^{n\times n}$ applied to the residual $\calR(Y_k)$, that is 
\begin{equation}
\calP^{-1}(\calR(Y_k)) = \calP^{-1}(\hcalL(Y_{k}) -  \hPi(Y_k) -  \hC).
\end{equation}
The preconditioned iteration scheme is as follows:
\begin{equation}
Y_{k+1} = Y_k - \calP^{-1}(\calR(Y_k)). \label{eq:pre_iter}
\end{equation}
We now describe how to construct the preconditioning operator $\calP^{-1}$, such that the preconditioned operator is close to the identity operator $\mathcal{I}$. That is,
\begin{equation*}
\calP^{-1}(\hcalL - \hPi) \approx \mathcal{I}.
\end{equation*}
A natural approach is to define $\calP^{-1} = (\hcalL - \hPi)^{-1}$.
Consequently, applying the preconditioning operator $\calP^{-1}$ to the residual $\calR(Y_k)$ is equivalent to solving the following generalized Sylvester equation for $\Delta P$
\begin{equation}
H_A \Delta P + \Delta P H_B^{\T} + \mathop{\sum}\limits_{i=1}^{m} \hN_{i} \Delta P \hM_{i}^{\T} = \calR(Y_k),\quad k = 0,1,2,\dots. \label{eq:dY}
\end{equation}
Solving such a generalized Sylvester equation generally requires iterative methods. A common way is to reformulate it via a fixed-point iteration into the following form:
\begin{equation}
H_A \Delta P_{j+1} + \Delta P_{j+1} H_B^{\T} = - \mathop{\sum}\limits_{i=1}^{m} \hN_{i} \Delta  P_j \hM_{i}^{\T} + \calR(Y_k), \quad j = 0,1,2,\dots.\label{eq:dYk}
\end{equation}

To solve \eqref{eq:dYk}, each iteration step requires solving a standard Sylvester equation by means of the BS algorithm.
However, the convergence of the fixed-point iteration scheme is strongly influenced by the spectral radius $\rho(\hcalL^{-1}\hPi)$. Moreover, since the computational complexity of the BS algorithm is $O(n^3)$, the computation time increases significantly after multiple iterations. Therefore, we consider replacing $\calP^{-1} = (\hcalL - \hPi)^{-1}$ with an approximate operator of 
$\calP^{-1}$ to avoid the increased computational cost resulting from repeatedly solving \eqref{eq:dY} with the BS algorithm during the iteration. 

Since $\rho(\hcalL^{-1}\hPi) < 1$,  the preconditioning $\calP^{-1}$ can be expressed via the convergent Neumann series:
\begin{align*}
\mathcal{P}^{-1} = (\hcalL - \hPi)^{-1} &= (\mathcal{I} - \hcalL^{-1}\hPi)^{-1}\hcalL^{-1} = \sum_{k=0}^{\infty} (\hcalL^{-1} \hPi)^k \hcalL^{-1}.
\end{align*}
Hence,
\begin{align*}
\mathcal{P}^{-1}(\calR(Y_k))  =\,  & \hcalL^{-1}(\calR(Y_k))  + \hcalL^{-1}\Big(\hPi\big(\hcalL^{-1}(\calR(Y_k))\big)\Big)
\\ &+ \hcalL^{-1}\hPi\left(\hcalL^{-1}\Big(\hPi\big(\hcalL^{-1}(\calR(Y_k))\big)\Big)\right)  +  \cdots .
\end{align*}

A first-order truncated expansion of the operator $\calP^{-1}$ is given by
\begin{equation}
\widetilde{\mathcal{P}}^{-1}(\calR(Y_k)) = \hcalL^{-1}(\calR(Y_k)) + \hcalL^{-1}\left(\hPi\left(\hcalL^{-1}(\calR(Y_k))\right)\right),\label{eq:secondNeu}
\end{equation}
which we employ as the truncated preconditioner in place of the full operator $\calP^{-1}$.

In practice, the computation of the truncated preconditioner $\widetilde{\calP}^{-1}(\calR(Y_k))$ consists of two parts. First, the Sylvester operator $\hcalL$ is applied to the residual $\calR(Y_k)$ to obtain $P_1 = \hcalL^{-1}(\calR(Y_k))$, where $P_1$ satisfies to the matrix equation
\begin{equation}
H_A P_1 + P_1H_B^{\T} = \calR(Y_k). \label{eq:P1}
\end{equation}
Note that in the AA algorithm, equation  \eqref{eq:P1}  is equivalent to the equation \eqref{eq:L(dX)} defining the update in the iteration. Hence, the difference $Y_{k} - Y_{k+1}$ satisfies \eqref{eq:P1} and can be regarded as its solution.

In the second part, the Sylvester operator $\hcalL$ is applied to $\hPi(\hcalL^{-1}(R(Y_k)))$, which requires solving the Sylvester equation:
\begin{equation}
H_A P_2 + P_2 H_B^{\T} = -\mathop{\sum}\limits_{i=1}^m \hN_i P_1 \hM_i^{\T}. \label{eq:P2}
\end{equation}

Equation 
\eqref{eq:P2} can be solved using the BS algorithm.

Finally, the approximate solution for $\Delta P$ in \eqref{eq:dY} is computed by $\Delta \widetilde{P} = P_1 + P_2$. According 
to \eqref{eq:pre_iter}, the update is computed as $Y_{k+1} = Y_k - \Delta \widetilde{P}$.

The procedure of the preconditioning is presented in \Cref{alg:pred}.

\begin{algorithm}[h!]
\caption{The approximate preconditioner \label{alg:pred}}
\begin{algorithmic}[1]
\REQUIRE Real block-triangular matrices $H_A, H_B\in\bbR^{n\times n}$, $\hN_i, \hM_i\in\bbR^{n\times n}$, the residual $\calR(Y_k)$, the difference $Y_{k} - Y_{k+1}$.
\ENSURE  The preconditioning $\Delta \widetilde{P}$
\STATE Set $P_1 = Y_{k} - Y_{k+1}$. 
\STATE Compute $P_2$ by solving the Sylvester matrix equation \eqref{eq:P2}.
\STATE The preconditioning $\Delta \widetilde{P} = P_1 + P_2$.
\end{algorithmic}
\end{algorithm}

\subsection{The complete algorithm}
In this section, we combine the preconditioning technique with the Anderson acceleration method and propose a P-aAA algorithm. The P-aAA algorithm is an alternating scheme, in which computations are carried out alternately according to formulas \eqref{eq:Yk+1} and \eqref{eq:pre_iter}.

In the P-aAA algorithm, we first perform the Schur decomposition of matrices $A$ and $B$ 
in \eqref{eq:gensylv}  derive the iterative scheme \eqref{eq:Hsylv}. Incorporating delayed Anderson acceleration, we compute $Y_{k+1}$ using the iterative scheme \eqref{eq:pre_iter} from step $1$ to step $k$ when $k<AA_{start}$.
Then, when $k>AA_{start}$ and $\text{mod}(k,2)=1$, it utilizes Anderson acceleration for the iteration; 
otherwise, it continues to use the preconditioning technique to update $Y_{k+1}$.

We now provide a detailed description of how AA is employed to solve equation \eqref{eq:Hsylv} at the $k$-th step when $k>AA_{start}$ and $\text{mod}(k,2)=1$. We compute $Y_{k+1}$ by solving the equation
\begin{equation*}
Y_{k+1} = Y_{k} - \calP^{-1}(\calR(Y_{k})),
\end{equation*}
where $Y_{k}\in\bbR^{n\times n}$ is obtained from the $k$-th iteration. 
In the P-aAA algorithm, to enhance memory efficiency, we store only the most recent difference 
\begin{align*}
\calF_{k} = \Delta F_{k-1} &= F_{k} - F_{k-1} \\
& = G(Y_{k}) - Y_{k} - G(Y_{k-1}) + Y_{k-1}\\
& = (Y_{k+1} - Y_{k}) - (Y_{k} - Y_{k-1}) = \Delta Y_{k} - \Delta Y_{k-1},
\end{align*}
where $\calF_{k} \in\bbR^{n\times n}$.
This strategy also helps reduce the dimension of the least-squares problem in the Anderson acceleration procedure.
According to \eqref{eq:dY}, the least-square problem in the P-aAA algorithm is given by
\begin{align}
\mathop{\min}\limits_{\g^{(k)}}\| F_{k} - \calF_{k} \g^{(k)} \|_{\F} &= 
\mathop{\min}\limits_{\g^{(k)}}\| G(Y_{k}) - Y_{k} - \calF_{k} \g^{(k)} \|_{\F}  \nonumber\\
&= \mathop{\min}\limits_{\g^{(k)}}\| Y_{k+1} - Y_{k} - \calF_{k} \g^{(k)} \|_{\F} \nonumber\\
& =\mathop{\min}\limits_{\g^{(k)}}\| \Delta Y_{k} - (\Delta Y_{k} - \Delta Y_{k-1}) \gamma^{(k)} \|_{\F}. \label{eq:leastPAAA}
\end{align}
The approximate solution $\tg^{(k)}$ of  \eqref{eq:leastPAAA} follows the same least-squares problem as in the AA algorithm; therefore, the detailed steps are omitted here.

Finally, the iteration of P-aAA can be given by
\begin{equation}
Y_{k+1} = 
\begin{cases}
Y_{k} - \calP^{-1}(\calR(Y_{k})), & \text{if} ~~ k < AA_{start}; \\
G(Y_k) - \mathcal{G}_k\tg^{(k)}, & \text{if} ~~ k > AA_{start}~ \text{and}~ \text{mod}(k,2) = 1.
\end{cases}\label{eq:PAAA}
\end{equation}
The procedure of the P-aAA algorithm is presented in \Cref{alg:pred_AAA}. 
\begin{remark}
In equation \eqref{eq:PAAA}, if we replace $Y_{k} - \calP^{-1}(\calR(Y_{k}))$ with \eqref{eq:Yk}, we refer to the resulting 
iterative scheme as the alternating Anderson acceleration (aAA) method.
\end{remark}

\begin{algorithm}[h!]
\caption{The prenconditioned alternating Anderson acceleration}\label{alg:pred_AAA}
\begin{algorithmic}[1]
\REQUIRE Given $A,B,N_i,M_i\in\mathbb{R}^{n\times n}$ for $i = 1,2\dots,m$ and $C\in\mathbb{R}^{n\times n}$. A stopping criterion $\epsilon$ and number of stored residual differences $\Delta F_k$: $m_{AA}=0$. Acceleration begins once the iteration count equals $AA_{start}$ (if $AA_{start} > 0$). 
\ENSURE  $X_{k+1}$ is an approximation to the solution of \eqref{eq:gensylv}.
\vspace{2pt}\hrule\vspace{2pt}
\STATE Set $Y_0 = 0$.
\STATE Compute the Schur decompositions $A = Q_AH_AQ_A^{\T}$ and $B = Q_BH_BQ_B^{\T}$.
\STATE Compute $\hC = Q_A^{\T}CQ_B$, $\hN_i = Q_A^{\T}N_iQ_A$, and $\hM_i = Q_B^{\T}M_iQ_B$.
\STATE 
\vspace{-1pt}
Solve $H_AY_1 + Y_1 H_B^{\T} = -\mathop{\sum}\limits_{i=1}^m \hN_i Y_0 \hM_i^{\T} + \hC$ by
             Bartels-Stewart method \cite{bar1972}. \vspace{-5pt}
\FOR {$k=1,2,\dots,$}
\STATE \vspace{-1pt}
Compute $g_{cur} = G(Y_k)$ by solving 
             $H_A Y_{k+1} + Y_{k+1}H_B^{\T} = -\mathop{\sum}\limits_{i=1}^m \hN_i Y_{k} \hM_i^{\T} + \hC$.
\STATE \vspace{-5pt} Compute the residual $f_{cur} = G(Y_k) - Y_k = Y_{k+1} - Y_{k}$.
\IF {$RRes< \epsilon$.}
\STATE break 
\ENDIF
\IF {$k <AA_{start}$}
\STATE Set  $Y_{k+1} = Y_k - \Delta \widetilde P$, $\Delta \widetilde P$ is computed by \Cref{alg:pred}.
\ELSE
\IF {$k > AA_{start}$}
\STATE $\Delta F_k = f_{cur} - f_{old}$;
\STATE $\Delta G_k = g_{cur} - g_{old}$;
\STATE $m_{AA} = 1$;
\ENDIF
\ENDIF
\IF {$k = AA_{start}$ or $mod(k,2) =1$} 
\STATE $f_{old} = f_{cur}$; 
\STATE $g_{old} = g_{cur}$;
\STATE $m_{AA} = 0$;
\ENDIF 
\IF {$m_{AA} = 0$}
\STATE $Y_{k+1} = Y_k -  \Delta \widetilde P$, $\Delta \widetilde P$ is computed by \Cref{alg:pred}.
\ELSE 
\STATE Solve the least-square problem $\mathop{\min}\limits_{\g^{(k)}} \|f_{cur} - \Delta F_k \g^{(k)}\|_{\F}$.
\STATE Compute $Y_{k+1} = g_{cur} - \Delta G_k \g^{(k)}$
\ENDIF
\ENDFOR
\RETURN $X_{k+1} = Q_A Y_{k+1}Q_B^{\T}$.
\end{algorithmic}
\end{algorithm}

\subsection{Convergence}
In this section, we establish that the preconditioning technique accelerates the convergence rate of the fixed-point iteration.

\begin{assumption}\label{assmp}
\begin{itemize}
\item The operator $\hcalL$ is invertible.
\item Assume $\|\hcalL^{-1}\hPi\|<1$, thus the spectral radius satisfies $\rho(\hcalL^{-1}\hPi)<1$, ensuring that equation \eqref{eq:Hsylv} admits a unique solution. 
\end{itemize}
\end{assumption}
Let $Y^{*}$ be the solution of equation \eqref{eq:Hsylv}, satisfying:
\begin{equation}
Y^{*} = \hcalL^{-1}(\hPi(Y^{*})) + \hcalL^{-1}(\hC),\label{eq:Ystart}
\end{equation}
and define the error as $E_{k} = Y_k - Y^{*}$.
The following theorem shows that the convergence of the fixed-point iteration.
\begin{theorem}\label{thm:FP}
Under Assumption \ref{assmp}, the fixed-point iteration \eqref{eq:Yk}
 converges linearly to the unique solution $Y^*$ of equation \eqref{eq:Hsylv}.
 \end{theorem}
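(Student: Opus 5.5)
The proof will be the standard contraction argument for a linear fixed-point map. The plan is to derive an error recursion by subtracting the fixed-point characterization \eqref{eq:Ystart} of $Y^*$ from the iteration \eqref{eq:Yk}. Since $\hcalL^{-1}$ and $\hPi$ are linear operators on $\bbR^{n\times n}$, the affine term $\hcalL^{-1}(\hC)$ cancels and we obtain
\begin{equation*}
E_{k+1} = Y_{k+1} - Y^* = \hcalL^{-1}\big(\hPi(Y_k)\big) - \hcalL^{-1}\big(\hPi(Y^*)\big) = \hcalL^{-1}\hPi(E_k),
\end{equation*}
and hence $E_k = (\hcalL^{-1}\hPi)^k E_0$ for all $k\ge 0$.

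First, I will record existence and uniqueness of $Y^*$. Under Assumption \ref{assmp}, $\rho(\hcalL^{-1}\hPi)<1$, so $\mathcal{I}-\hcalL^{-1}\hPi$ is invertible. Therefore $Y^* = (\mathcal{I}-\hcalL^{-1}\hPi)^{-1}\hcalL^{-1}(\hC)$ is the unique solution of \eqref{eq:Hsylv}. One can equally invoke the Neumann series already written in the preconditioning subsection.

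Second, I will take norms in the error recursion, using the operator norm induced by $\|\cdot\|_\F$ (the norm implicit in Assumption \ref{assmp}). Setting $q:=\|\hcalL^{-1}\hPi\|<1$, this gives
\begin{equation*}
\|E_{k+1}\|_\F \le q\,\|E_k\|_\F \quad\text{and so}\quad \|E_k\|_\F \le q^k \|E_0\|_\F = q^k\|Y^*\|_\F,
\end{equation*}
where the last equality uses $Y_0=0$. This is exactly linear (Q-linear) convergence with rate $q$. I would add the remark that, even if only $\rho(\hcalL^{-1}\hPi)<1$ were assumed, Gelfand's formula gives, for every $\varepsilon>0$, a constant $c_\varepsilon$ with $\|E_k\|_\F\le c_\varepsilon(\rho+\varepsilon)^k\|E_0\|_\F$. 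Choosing $\varepsilon$ so that $\rho+\varepsilon<1$ gives R-linear convergence with asymptotic rate $\rho(\hcalL^{-1}\hPi)$.

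There is no substantive obstacle here. The only point needing care is to state precisely which norm the assumption $\|\hcalL^{-1}\hPi\|<1$ refers to, and that it is an induced operator norm, so that submultiplicativity applies. I would also note that the transformation $X = Q_A Y Q_B^\T$ is an $\|\cdot\|_\F$-isometry, so the same linear rate transfers to the original iterates $X_k$ of \eqref{eq:Xk}.
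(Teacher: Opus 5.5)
Your proof is correct and follows the paper's own argument. Like the paper, you subtract \eqref{eq:Ystart} from \eqref{eq:Yk} to obtain $E_{k+1}=\hcalL^{-1}\hPi(E_k)=(\hcalL^{-1}\hPi)^{k+1}(E_0)$ and then bound $\|E_{k+1}\|\le\|\hcalL^{-1}\hPi\|^{k+1}\|E_0\|$. Your additions are sound refinements that the paper leaves implicit: specifying the Frobenius-induced operator norm, deriving uniqueness from the invertibility of $\mathcal{I}-\hcalL^{-1}\hPi$, the Gelfand-formula R-linear rate under only $\rho(\hcalL^{-1}\hPi)<1$, and the isometry $X=Q_AYQ_B^{\T}$.
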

\begin{proof}

According to \eqref{eq:Ystart} and $E_k = Y_k - Y^{*}$, we have
\begin{align*}
E_{k+1} &= Y_{k+1} - Y^{*} \\
& = \hcalL^{-1}(\hPi(Y_k)) + \hcalL^{-1}(\hC) - \hcalL^{-1}(\hPi(Y^{*})) - \hcalL^{-1}(\hC) \\
& = \hcalL^{-1}(\hPi(Y_k - Y^{*}))\\
& = \hcalL^{-1}\hPi(E_k).
\end{align*}
Thus, the error satisfies the recurrence
\begin{equation*}
E_{k+1} = \hcalL^{-1}\hPi(E_k) = (\hcalL^{-1}\hPi)^{k+1}(E_0).
\end{equation*}
Since $\|\hcalL^{-1}\hPi\|<1$, 
\begin{equation}
\|E_{k+1}\| \leq \|\hcalL^{-1}\hPi\|^{k+1} \|E_0\|. \label{eq:Ebound-fix}
\end{equation}
Consequently, the fixed-point iteration \eqref{eq:Yk} converges linearly to the unique solution $Y^*$
of equation \eqref{eq:Hsylv} for any initial guess.
\end{proof}

Now, we turn our attention to the convergence of the preconditioned iteration scheme $Y_{k+1} = Y_k - \widetilde{\calP}^{-1}(\calR(Y_k))$, where the residual $\calR(Y_k) = \hcalL(Y_k) - \hPi(Y_k) - \hC$ and $\widetilde{\calP}^{-1}$ is defined by \eqref{eq:secondNeu}. 
\begin{lemma}\label{lem:P}
Let $\widetilde{\calP}^{-1}$ be the approximate preconditioner defined by the first-order truncation of the $(\hcalL - \hPi)^{-1}$, that is,
\begin{equation*}
\widetilde{\calP}^{-1} = \hcalL^{-1} + \hcalL^{-1}\hPi\hcalL^{-1}.
\end{equation*}
For any matrix $Z\in\bbR^{n\times n}$, the following holds:
\begin{equation*}
\widetilde{\calP}^{-1}(\hcalL - \hPi)(Z) = Z - (\hcalL^{-1}\hPi)^2(Z).
\end{equation*}
\end{lemma}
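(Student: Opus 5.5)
The identity is a finite algebraic computation with linear operators on $\bbR^{n\times n}$, so the plan is to expand the composition and simplify. The only facts needed are that $\hcalL$, $\hPi$, and $\hcalL^{-1}$ are linear (each is a sum of left and right multiplications by fixed matrices, or the inverse of such an operator). We also need that $\hcalL^{-1}\hcalL=\mathcal{I}$, which holds by the first item of Assumption \ref{assmp}. No convergence property of the Neumann series is used; the lemma is purely about the truncated operator $\widetilde{\calP}^{-1}=\hcalL^{-1}+\hcalL^{-1}\hPi\hcalL^{-1}$.

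\textbf{Step 1.} Fix $Z$ and set $W=(\hcalL-\hPi)(Z)=\hcalL(Z)-\hPi(Z)$. By the definition of $\widetilde{\calP}^{-1}$,
\begin{equation*}
\widetilde{\calP}^{-1}(W)=\hcalL^{-1}(W)+\hcalL^{-1}\bigl(\hPi(\hcalL^{-1}(W))\bigr).
\end{equation*}

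\textbf{Step 2.} Compute $\hcalL^{-1}(W)$. By linearity of $\hcalL^{-1}$ and $\hcalL^{-1}\hcalL=\mathcal{I}$,
\begin{equation*}
\hcalL^{-1}(W)=Z-\hcalL^{-1}\hPi(Z)=(\mathcal{I}-T)(Z),\qquad T:=\hcalL^{-1}\hPi .
\end{equation*}

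\textbf{Step 3.} Rewrite the second term. It equals $\hcalL^{-1}\hPi\bigl(\hcalL^{-1}(W)\bigr)=T\bigl((\mathcal{I}-T)(Z)\bigr)$, which by linearity of $T$ is $T(Z)-T^2(Z)$.

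\textbf{Step 4.} Add the two pieces: $(\mathcal{I}-T)(Z)+T(\mathcal{I}-T)(Z)=(\mathcal{I}+T)(\mathcal{I}-T)(Z)=Z-T^2(Z)$. This is exactly $Z-(\hcalL^{-1}\hPi)^2(Z)$, as claimed.

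There is no genuine obstacle. The one point needing care is the order of composition: $\widetilde{\calP}^{-1}$ factors as $(\mathcal{I}+T)\hcalL^{-1}$, so that $\widetilde{\calP}^{-1}(\hcalL-\hPi)=(\mathcal{I}+T)(\mathcal{I}-T)$. One should not write $\hcalL^{-1}(\mathcal{I}+\hPi\hcalL^{-1})$, which would produce terms of the form $\hPi\hcalL^{-1}$ that do not cancel. Linearity of $\hPi$ and $\hcalL^{-1}$ should be stated explicitly, since it is what licenses distributing $T$ over the difference in Step 3.
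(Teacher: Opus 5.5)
Your proof is correct and follows the paper's argument: both expand $\widetilde{\calP}^{-1}(\hcalL-\hPi)(Z)$ using linearity and $\hcalL^{-1}\hcalL=\mathcal{I}$, so that the middle terms $\mp\hcalL^{-1}\hPi(Z)$ cancel, and your factorization $(\mathcal{I}+T)(\mathcal{I}-T)$ is just a compact way of organizing the paper's four-term expansion. Your closing caution is mistaken, though: $\hcalL^{-1}(\mathcal{I}+\hPi\hcalL^{-1})$ is exactly the same operator as $\widetilde{\calP}^{-1}$, and composing it with $\hcalL-\hPi$ gives $\hcalL^{-1}(\hcalL-\hPi\hcalL^{-1}\hPi)=\mathcal{I}-T^2$ because $\hPi\hcalL^{-1}\hcalL=\hPi$, so every term cancels in that form too.
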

\begin{proof}
\begin{align*}
\widetilde{\calP}^{-1}(\hcalL - \hPi)(Z) &= (\hcalL^{-1} + \hcalL^{-1}\hPi\hcalL^{-1})(\hcalL - \hPi)(Z)\\
& = \hcalL^{-1}\hcalL(Z) - \hcalL^{-1}\hPi(Z) + \hcalL^{-1}\hPi\hcalL^{-1}\hcalL(Z) - \hcalL^{-1}\hPi\hcalL^{-1}\hPi(Z)\\
& = Z - \hcalL^{-1}\hPi(Z) + \hcalL^{-1}\hPi(Z) - \hcalL^{-1}\hPi\hcalL^{-1}\hPi(Z)\\
& = Z - (\hcalL^{-1}\hPi)^2(Z).
\end{align*}
\end{proof}
Based on Lemma \ref{lem:P}, we have the following theorem.
\begin{theorem}\label{thm:PaAA}
Under the Assumption \ref{assmp}, the preconditioned iteration scheme $Y_{k+1} = Y_k - \widetilde{\calP}^{-1}(\calR(Y_k))$, in the Step 12 of  \Cref{alg:pred_AAA}, converges linearly to the unique solution $Y^{*}$ of equation \eqref{eq:Hsylv}.
\end{theorem}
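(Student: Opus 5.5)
The plan is to mirror the proof of Theorem~\ref{thm:FP}: derive an error recurrence for the preconditioned scheme and then use Lemma~\ref{lem:P} to identify the iteration operator.

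First, since $Y^{*}$ solves \eqref{eq:Hsylv}, we have $(\hcalL - \hPi)(Y^{*}) = \hC$, which follows from \eqref{eq:Ystart} by applying $\hcalL$. Hence the residual can be written purely in terms of the error:
\begin{equation*}
\calR(Y_k) = (\hcalL - \hPi)(Y_k) - (\hcalL - \hPi)(Y^{*}) = (\hcalL - \hPi)(E_k),
\end{equation*}
by linearity of $\hcalL$ and $\hPi$.

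Next, subtract $Y^{*}$ from both sides of $Y_{k+1} = Y_k - \widetilde{\calP}^{-1}(\calR(Y_k))$. This gives
\begin{equation*}
E_{k+1} = E_k - \widetilde{\calP}^{-1}(\hcalL - \hPi)(E_k).
\end{equation*}
Applying Lemma~\ref{lem:P} with $Z = E_k$ yields
\begin{equation*}
E_{k+1} = E_k - \bigl(E_k - (\hcalL^{-1}\hPi)^2(E_k)\bigr) = (\hcalL^{-1}\hPi)^2(E_k),
\end{equation*}
and iterating gives $E_{k+1} = (\hcalL^{-1}\hPi)^{2(k+1)}(E_0)$.

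Finally, take norms, using submultiplicativity of the operator norm in Assumption~\ref{assmp}:
\begin{equation*}
\|E_{k+1}\| \le \|\hcalL^{-1}\hPi\|^{2}\,\|E_k\| \le \|\hcalL^{-1}\hPi\|^{2(k+1)}\|E_0\|.
\end{equation*}
Since $\|\hcalL^{-1}\hPi\|<1$, this is linear convergence with contraction factor $\|\hcalL^{-1}\hPi\|^2$. Uniqueness of $Y^{*}$ is already guaranteed by Assumption~\ref{assmp}. Comparing with \eqref{eq:Ebound-fix} also shows that the factor is the square of the factor for the plain fixed-point iteration, which is the acceleration claimed in the paper.

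The only point needing care, and the expected obstacle, is making sure the step actually analyzed is the exact operator $\widetilde{\calP}^{-1}$. In Algorithm~\ref{alg:pred}, $P_1$ is taken to be $Y_k - Y_{k+1}$ from the fixed-point step. By \eqref{eq:L(dX)} this equals $\hcalL^{-1}(\calR(Y_k))$ exactly, so $\Delta\widetilde P = \widetilde{\calP}^{-1}(\calR(Y_k))$ and no approximation error enters. I would state this identification explicitly before the error recurrence. If the truncated SVD or other inexactness were present, the argument would have to be modified, but Step~12 involves only the exact preconditioner.
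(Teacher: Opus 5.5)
Your proposal is correct and follows the paper's proof essentially step for step: write $\calR(Y_k)=(\hcalL-\hPi)(E_k)$, apply Lemma~\ref{lem:P} to get $E_{k+1}=(\hcalL^{-1}\hPi)^2(E_k)$, and conclude $\|E_{k+1}\|\le\|\hcalL^{-1}\hPi\|^{2(k+1)}\|E_0\|$. Your explicit check, via \eqref{eq:L(dX)}, that $\Delta\widetilde{P}=P_1+P_2$ from Algorithm~\ref{alg:pred} equals $\widetilde{\calP}^{-1}(\calR(Y_k))$ is a worthwhile addition; the paper makes this identification only in the discussion before the algorithm, not in the proof itself.
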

\begin{proof} 
Since the residual $\calR(Y^{*}) = 0$ and $\hcalL(Y^*) = \hPi(Y^{*}) + \hC$, we have 
\begin{equation*}
\calR(Y_k) = \hcalL(E_k) - \hPi(E_k) = (\hcalL - \hPi)(E_k).
\end{equation*}
Substituting into the preconditioned iteration scheme
\begin{align*}
E_{k+1} &= Y_{k+1} - Y^{*}\\
& = Y_{k} - Y^{*} - \widetilde{\calP}^{-1}((\hcalL - \hPi)(E_{k}))\\
& = E_{k} - \widetilde{\calP}^{-1}((\hcalL - \hPi)(E_{k})).
\end{align*}
Applying Lemma \ref{lem:P},
\begin{align*}
E_{k+1} &= E_k - (E_k - (\hcalL^{-1}\hPi)^2(E_k))\\
& = (\hcalL^{-1}\hPi)^2(E_k).
\end{align*}
Thus, we have 
\begin{equation*}
E_{k+1} = (\hcalL^{-1}\hPi)^2(E_k) = [(\hcalL^{-1}\hPi)^2]^{k+1}(E_0).
\end{equation*}
Analogous to Theorem \ref{thm:FP}, 
we obtain
\begin{equation}
\|E_{k+1}\| \leq \|\hcalL^{-1}\hPi\|^{2(k+1)}\|E_0\|. \label{eq:Ebound-pred}
\end{equation}
\end{proof}

From the two bounds above, it can be observed that the error decay rate of formula \eqref{eq:Ebound-pred} is faster than that of the fixed-point iteration in formula \eqref{eq:Ebound-fix}. This demonstrates that the preconditioning technique can accelerate the convergence of the fixed-point iteration. And extensive numerical experiments in the next section confirm the effectiveness of the P-aAA algorithm.

\section{Experimental results}
\label{sec:experiments}
This section presents the numerical experiments conducted in this study. All data used are either publicly available or can be generated directly. Our objectives are twofold. First, for small- to medium-scale problems, we show that the P-aAA algorithm achieves faster convergence and lower computational time than AA and aAA. We also compare its performance with an iterative method based on the Neumann series expansion (Neumann) \cite{jarlebring2018} and with the basic fixed-point iteration, and we assess their accuracy and computational cost against a direct solver based on the Sherman–Morrison–Woodbury (SMW) formula \cite{hao2021}.

Second, for large-scale generalized Sylvester matrix equations, we combine the proposed algorithms with Krylov subspace methods to solve these equations. We compare the performance of two state-of-the-art projection algorithms: the GLEK method \cite{shank2016} and the low-rank commutating Krylov subspace algorithm (LRCK) \cite{jarlebring2018}.
More precisely:

FP: The fixed-point iteration based on the iteration scheme $\calL(X_{k+1}) =  \Pi(X_k) +  C$ for solving
the generalized Sylvester matrix equation. The stopping criterion is $10^{-9}$.

Neumann: An iterative algorithm derived from Neumann series expansion (Neumann). The MATLAB code for Neumann is  available at the web page\footnote{https://sites.google.com/site/palittadavide/home/software}.

SMW: A direct solver based on the Sherman–Morrison–Woodbury formula for solving the dense matrix equation
\eqref{eq:gensylv}, with
$M_i$ and $N_i$ are low-rank matrices. The MATLAB code for SMW method is shown in \cite{hao2021}.

GLEK: Extended Krylov subspace method for solving large-scale generalized Lyapunov matrix equation. The inner and outer tolerances are set $10^{-2}$ and $10^{-9}$. The GLEK code is available at the web page of Simoncini\footnote{http://www.dm.unibo.it/˜simoncin/software.html}.

LRCK: a low-rank commuting Krylov subspace method combined with a Neumann series--based iteration for solving generalized Lyapunov and Sylvester matrix equations. The inner tolerance is set $10^{-14}$, as suggested in the original code. We set $10^{-9}$ as the outer tolerance.  The MATLAB code for LRCK method is available at the web page\footnote{https://sites.google.com/site/palittadavide/home/software}. 

All the experiments are performed in MATLAB 2023b on a laptop with 16GB RAM.

\subsection{Application to small-to-medium-scale generalized Lyapunov and Sylvester equations}
This section presents selected computational results comparing the performance of AA, aAA, and P-aAA algorithms applied to the small-to-medium-scale generalized Lyapunov and Sylvester equation. We set $m_{Max} = 2$
in the AA algorithm. For the AA, aAA, and P-aAA algorithms, we choose $AA_{start} = 10$ in Example $4.2$
and $AA_{start} = 5$ in all other examples. The truncated SVD of $\Delta F_k$ is computed by MATLAB \texttt{svds}
and $r$ is obtained by $\sigma_r/\sigma_{1} \ge 0.1$ and $\sigma_{r+1}/\sigma_{1} < 0.1$, where $\sigma_i$ are singular value of $\Delta F_k$ in decreasing order.
The spectral radius $\rho(\calL^{-1}\Pi)$ for examples $4.1$-$4.5$ is reported in Tables \ref{tab:example4.1-4.4}
and \ref{tab:example4.5}.

\texttt{EXAMPLE $4.1$} The matrix $A$ of the first problem is borrowed from  \texttt{HEAT1} in \cite{benner2013}
and
\begin{equation*}
N_1=
\begin{bmatrix}
8 & -1 & -0.05 &  &  \\
-1 & 8 & -1 & \ddots &  \\
-0.05 & -1 & \ddots & \ddots & -0.05 \\
 & \ddots & \ddots & \ddots & -1 \\
 &  & -0.05 & -1 & 8
\end{bmatrix}\in\mathbb{R}^{900\times 900},
\end{equation*}
$B = A$, $M_1 = N_1$, 
and $C = FF^{\T}$ with $F = \texttt{randn}(900,5)$. 

Figure \ref{fig:example4.1-4.4} demonstrates that the AA, aAA, and P-aAA algorithms all require fewer iterations than the FP, GLEK, and Neumann methods, P-aAA takes the lowest iteration count overall. In terms of computation time, the FP, GLEK, and Neumann methods require significantly longer to converge to the specified tolerance compared to AA, aAA, and P-aAA. Notably, P-aAA achieves the shortest computation time among all the tested methods. These experimental results are summarized in Table \ref{tab:example4.1-4.4}.

\texttt{EXAMPLE $4.2$}
Our second problem is a synthetic problem from \cite{kressner2015}.
The coefficient matrices are given by
\begin{equation*}
A = -\frac{R + R^{\T}}{2} - \frac{n}{8}I_{400},\quad N_1 = -\left (2(S + S^{\T}) + \frac{3n}{4}I_{400}\right)/10^{2},
\quad N_1 = M_1,
\end{equation*}
where the matrices $R = 1.6\times\texttt{rand}(400,400)$ and $S=2.4\times\texttt{rand}(400,400)$ are random matrices generated with the MATLAB function \texttt{rand}. We set $C = FF^{\T}$ and $F = \texttt{rand}(400,10)$.

As shown in Table \ref{tab:example4.1-4.4}, the computational time of P-aAA is significantly lower than that of the other algorithms. Furthermore, Figure \ref{fig:example4.1-4.4} demonstrates that P-aAA requires the fewest number of iterations to converge to the specified tolerance. The computational times of AA and aAA are comparable to each other, and both are substantially shorter than those of GLEK, Neumann, and FP. Meanwhile, GLEK, Neumann, and FP all require exactly the same number of iterations.

\texttt{EXAMPLE $4.3$} The third example is from the the bilinear system described in \cite{lin2009}, yields the following
generalized Lyapunov equation
\begin{equation*}
A X + XA^{\T} + \g^2 \mathop{\sum}\limits_{i=1}^2 N_i XN_i^{\T}
= CC^{\T},
\end{equation*}
where $\g=1/3$, $A = \texttt{tridiag}(2,-5,2)\in\bbR^{1000\times 1000}$, $N_1 = \texttt{tridiag}(3,0,-3)\in\bbR^{1000\times 1000}$, and
$N_2 = -N_1 + I \in\bbR^{1000\times 1000}$, $I\in\bbR^{1000\times 1000}$ is an identity matrix. 
We consider
$C\in\bbR^{1000\times 2}$ being a normalized random matrix.

As shown in Figure \ref{fig:example4.1-4.4} and Table \ref{tab:example4.1-4.4}, the P-aAA algorithm performs favorably compared to all other methods in terms of both the number of iterations and total computation time. Specifically, P-aAA requires significantly fewer iterations than the other algorithms and achieves the shortest overall computation time. In contrast, both AA and aAA do not outperform GLEK, Neumann, or FP in either iteration count or computational efficiency.

\texttt{EXAMPLE $4.4$} This problem arises from the second-order Carleman bilinearization of a nonlinear control system that appears in RC circuit simulations \cite{bai2006}. It reads
\begin{equation*}
AX + XA^{\T} + N_1XN_1^{\T} = C,
\end{equation*}
where $A$, $N_1$, $C$ are of size $n = n_0 + n_0^2$ and are 
given by 
\begin{align*}
A &= \begin{bmatrix}
A_1 &  A_2 \\
0     &  A_1 \otimes I + I \otimes A_1
\end{bmatrix}, \quad
N_1 &= \begin{bmatrix}
0 & 0 \\
b \otimes I + I \otimes b & 0
\end{bmatrix}, \quad
C &= - \begin{bmatrix}
b \\ 0
\end{bmatrix}
\begin{bmatrix}
b^{\T} & 0
\end{bmatrix},
\end{align*}
where $A_1\in\bbR^{n_0\times n_0}$,
$A_2\in\bbR^{n_0\times n_0^2}$ and $b\in\bbR^{n_0}$.
We refer to \cite{bai2006} for the 
explicit construction of the various blocks and
we set $n_0 = 30$ $(n = 930)$. 

In this example, the spectral radius $\rho(\calL^{-1}\Pi) > 1$.
As shown in Figure \ref{fig:example4.1-4.4}, the AA, GLEK, Neumann, and SMW algorithms all fail to converge, whereas both aAA and P-aAA remain convergent. Moreover, P-aAA converges to a residual on the order of $10^{-9}$, while the residual of aAA reaches the order of $10^{-8}$ after $200$ iterations. In addition, P-aAA achieves shorter computational time and requires fewer iterations than aAA.

\begin{figure}[h!]
\centering{
\subfloat[Example $4.1$]{\includegraphics[scale=0.3]{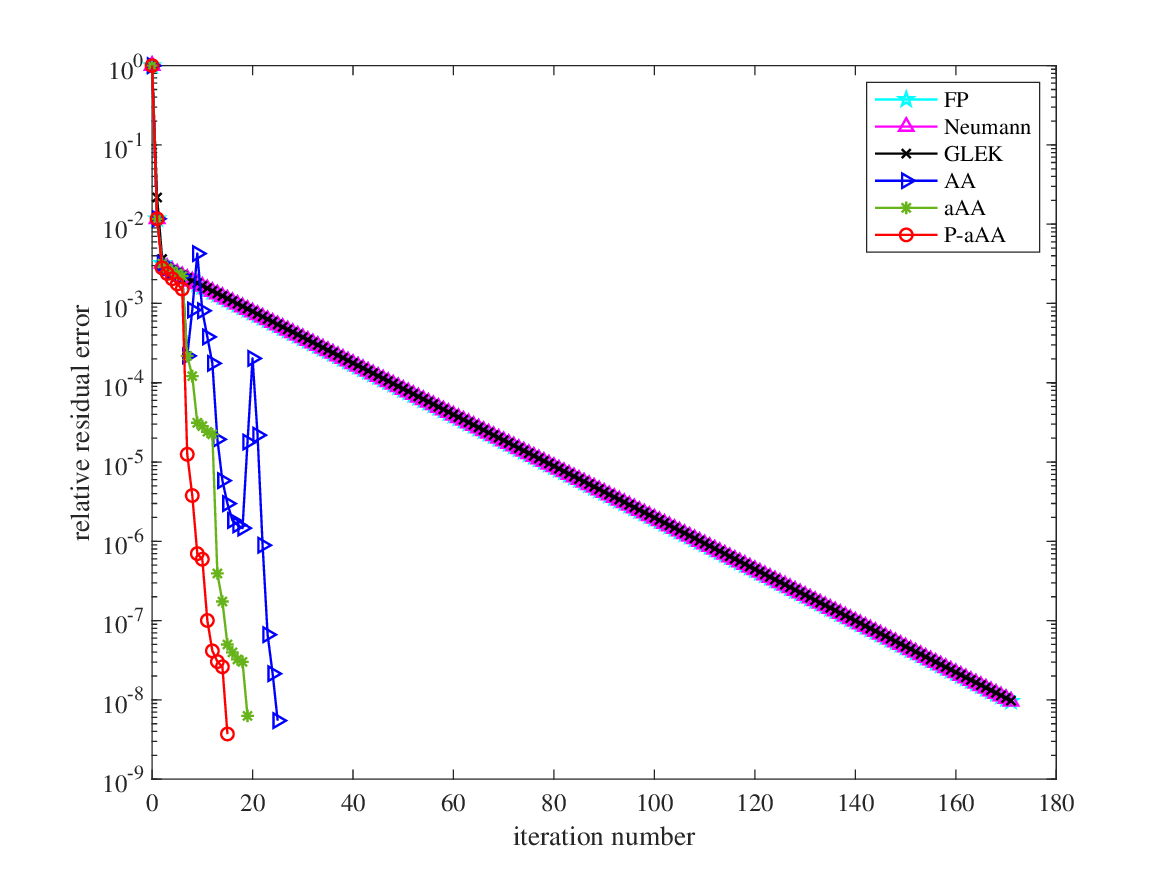}}
\subfloat[Example $4.2$]{\includegraphics[scale=0.3]{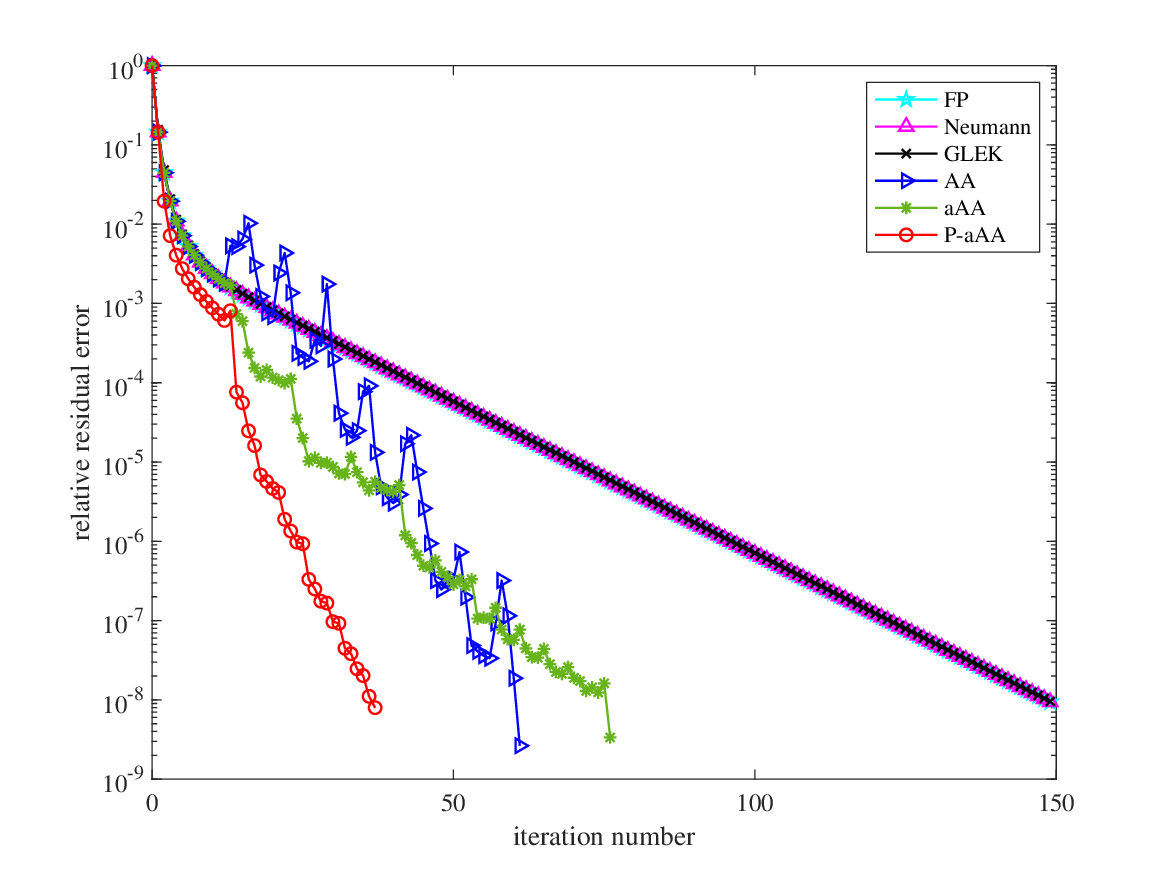}}\\
\subfloat[Example $4.3$]{\includegraphics[scale=0.3]{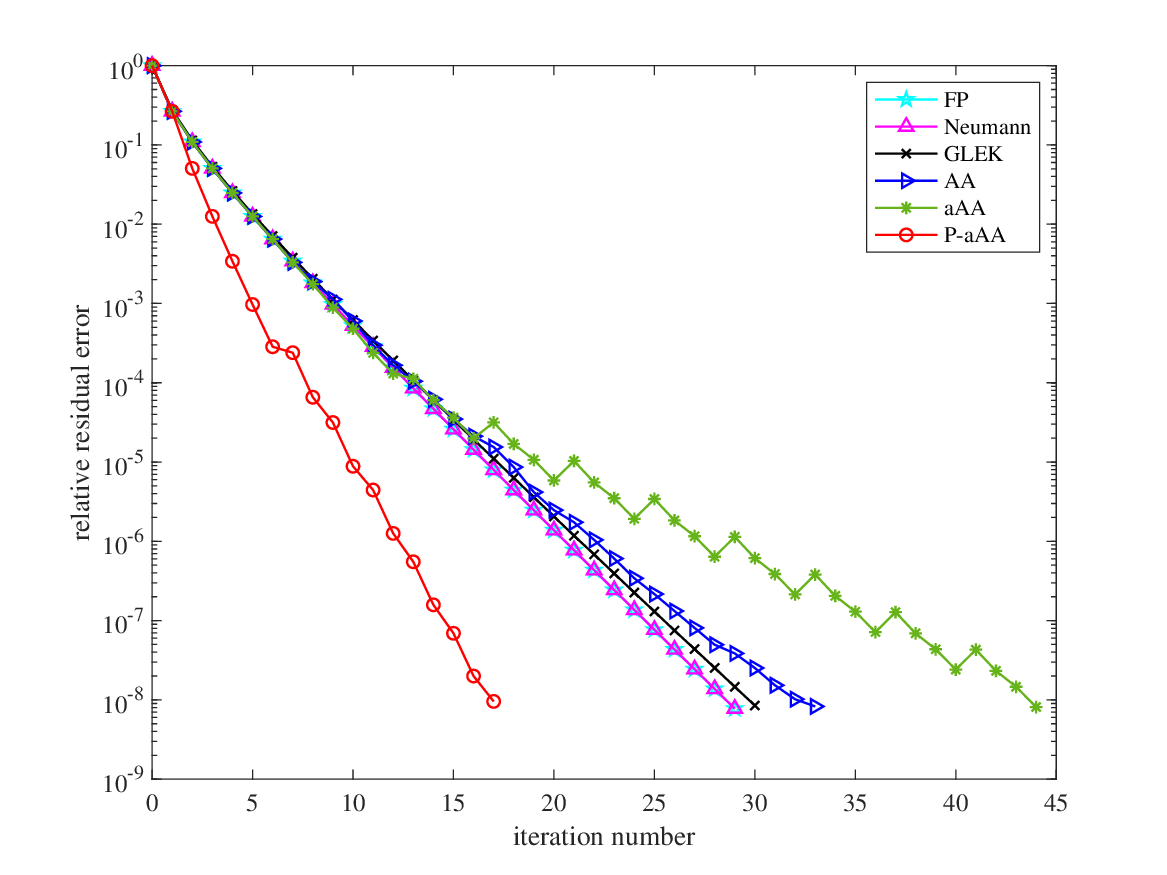}}
\subfloat[Example $4.4$]{\includegraphics[scale=0.3]{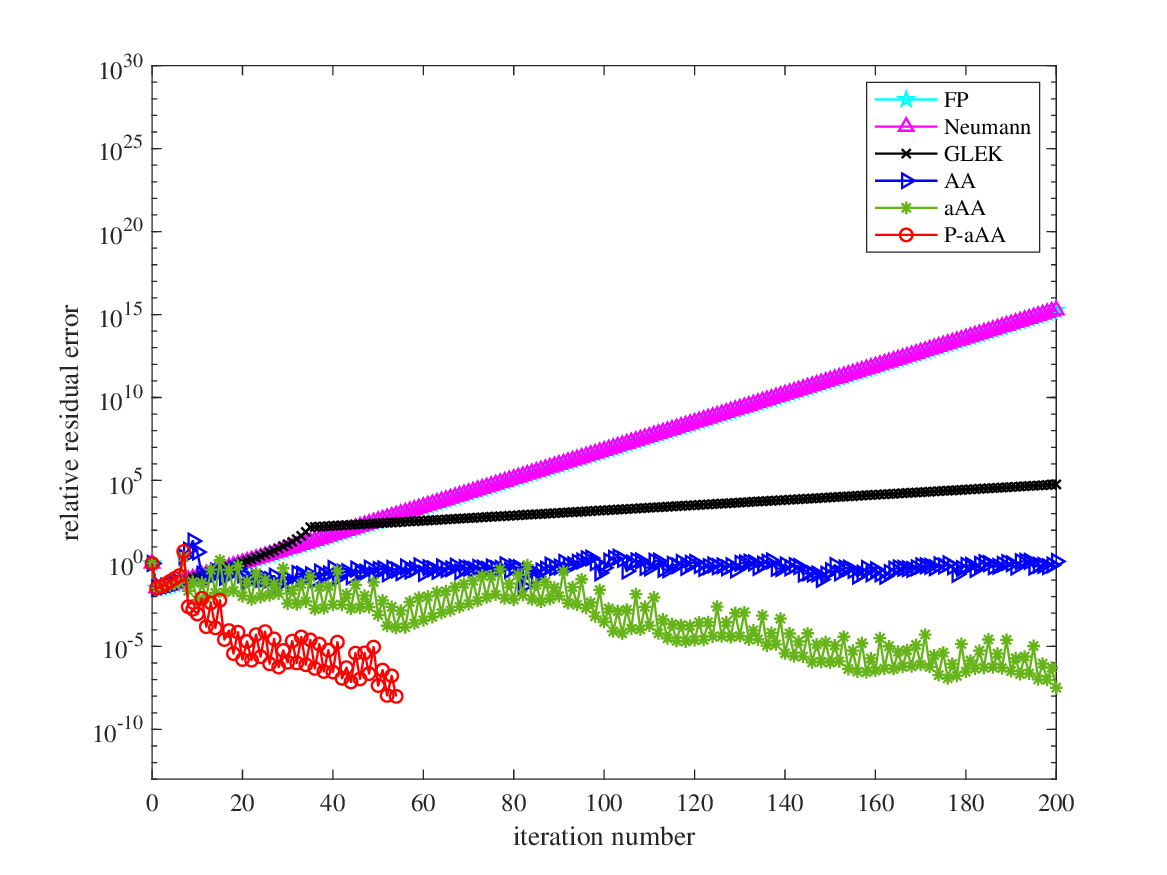}}
}
\caption{The iteration numbers and relative residual error of the proposed methods, FP, Neumann, and GLEK on Examples $4.1$-$4.4$.}\label{fig:example4.1-4.4}
\end{figure}

\begin{table}[htp]
\centering
\caption{The time requirement (in seconds) of the proposed methods, GLEK, Neumann, and FP and the spectral radius $\rho(\calL^{-1}\Pi)$ in Examples $4.1$-$4.4$.}
\label{tab:example4.1-4.4} 
\resizebox{\textwidth}{!}{
\begin{tabular}{|c|ccccccc|}\hline
& $\rho(\calL^{-1}\Pi)$ &  AA &  aAA &   P-aAA &  GLEK & Neumann & FP  \\ \hline
Example $4.1$& $0.93$ &
$3.40 $ & $2.36$ &$\bf 2.21$ & $6.67\times 10^1 $ & $2.47\times 10^1$ & $1.45\times 10^1$ \\ \hline
Example $4.2$& $0.92$ &
$0.95$ & $0.92$ & $\bf 0.63$ & $2.94\times 10^2$ & $1.30$& $1.40$ \\ \hline
Example $4.3$& $0.57$ &
$6.32$ & $6.50$& $\bf 3.56$ & $1.05\times 10^2$ &$3.90$ & $3.76$  \\ \hline
Example $4.4$& $1.22$ &
--- & $2.01\times 10^1$ & $\bf 7.34$ & --- & --- & --- \\ \hline
\end{tabular}}
\end{table}

\texttt{EXAMPLE $4.5$} This example considers a generalized Lyapunov equation of the form
\begin{equation*}
AX + XA^{\T} + N_1 X N_1^{\T} = CC^{\T},
\end{equation*}
where $N_1 = \alpha U_1U_1^{\T}$ and $U_1\in\bbR^{500\times \ell}$. 
We compare the computational time and the relative error between the proposed method and the SMW method, with the column dimension $\ell$ of $U_1$ set to $40$, $50$, and $60$, respectively. We set the parameters $\alpha$ to keep the spectral radius $\rho(\calL^{-1}\Pi) < 1$ and $\alpha$ 
are collected in Table \ref{tab:example4.5}.

From Figure \ref{fig:example4.5}, it can be observed that P-aAA, AA, and aAA achieve smaller relative residuals than SMW. For $\ell = 40$ and $50$, P-aAA requires fewer iterations than both AA and aAA. When $\ell = 60$,
AA attains the smallest number of iterations, P-aAA requires slightly more, and aAA has the largest iteration count.
As shown in Table \ref{tab:example4.5}, the computational time of SMW increases with the number of columns in $N_1$,
whereas P-aAA achieves the shortest computational time among all the algorithms.

\begin{figure}[h!]
\centering{
\subfloat[$\ell = 40$]{\includegraphics[scale=0.21]{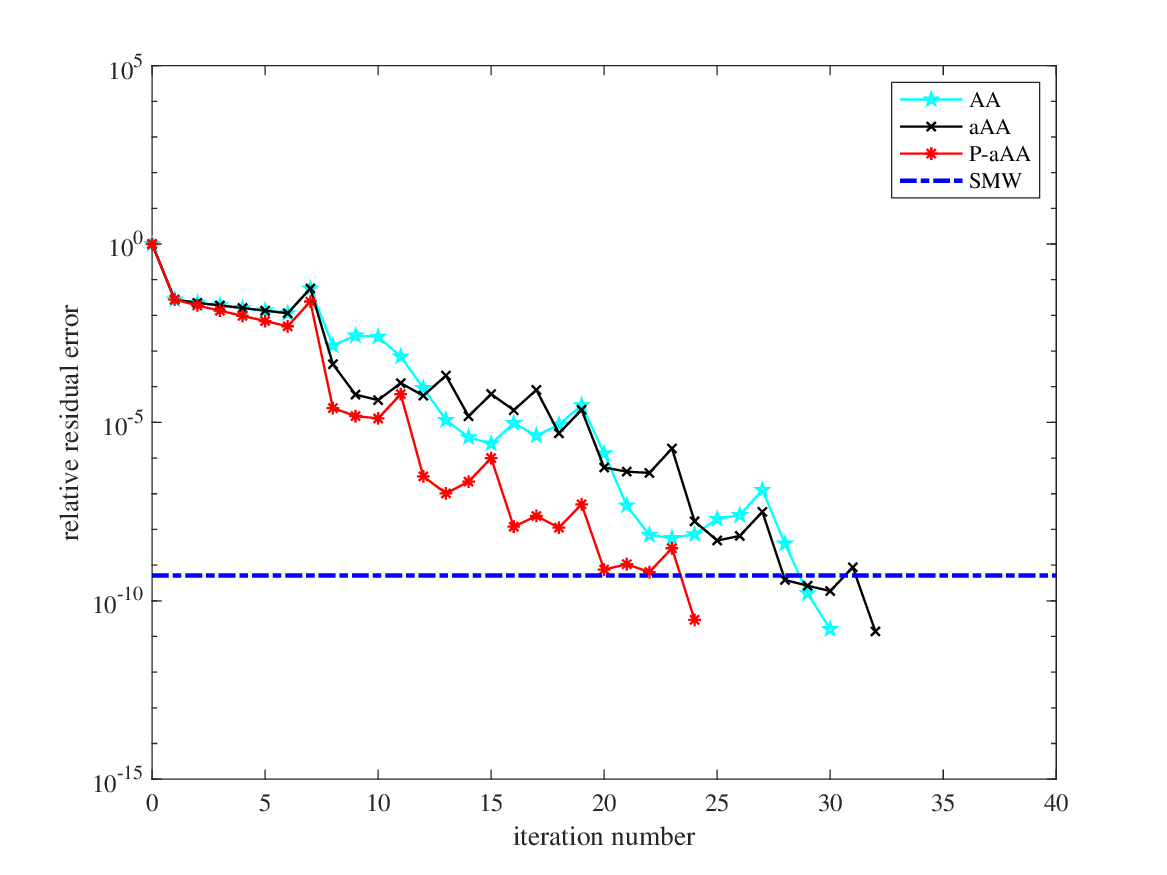}}
\subfloat[$\ell = 50$]{\includegraphics[scale=0.21]{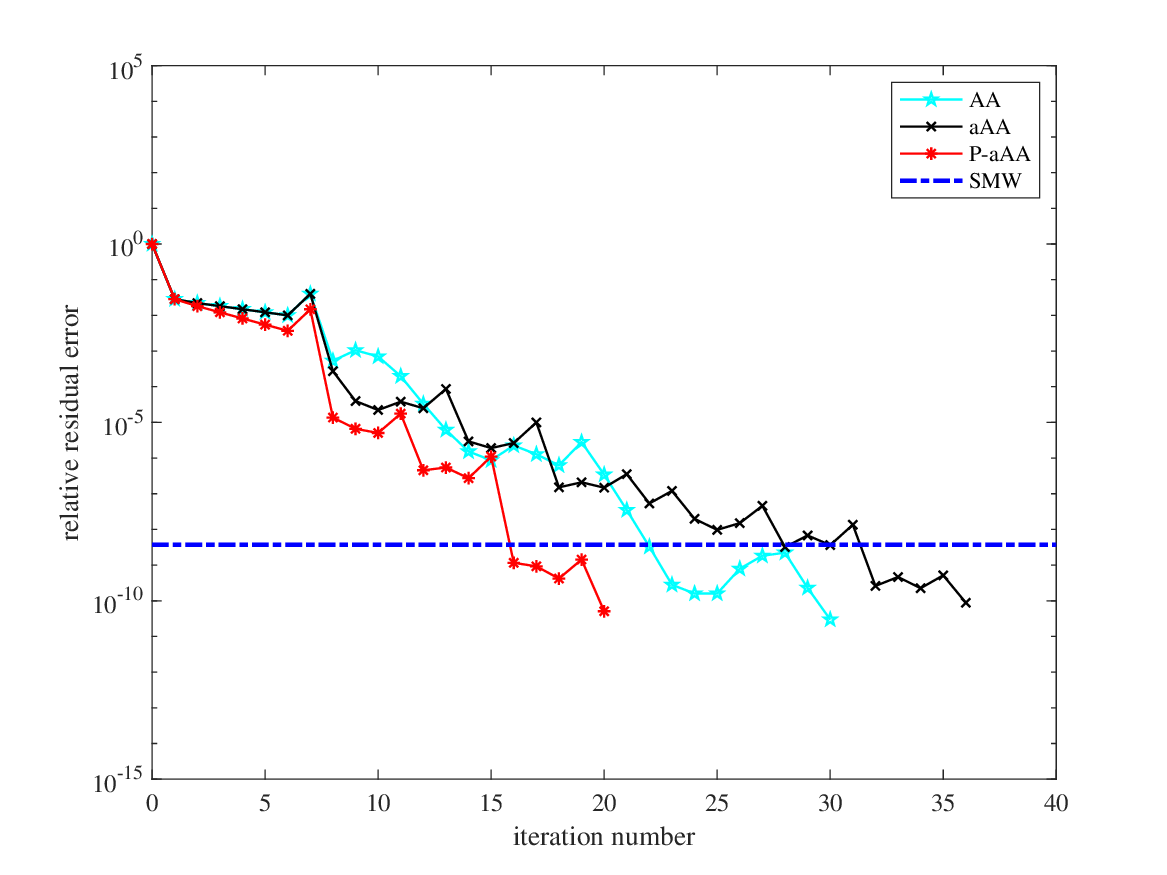}}
\subfloat[$\ell = 60$]{\includegraphics[scale=0.21]{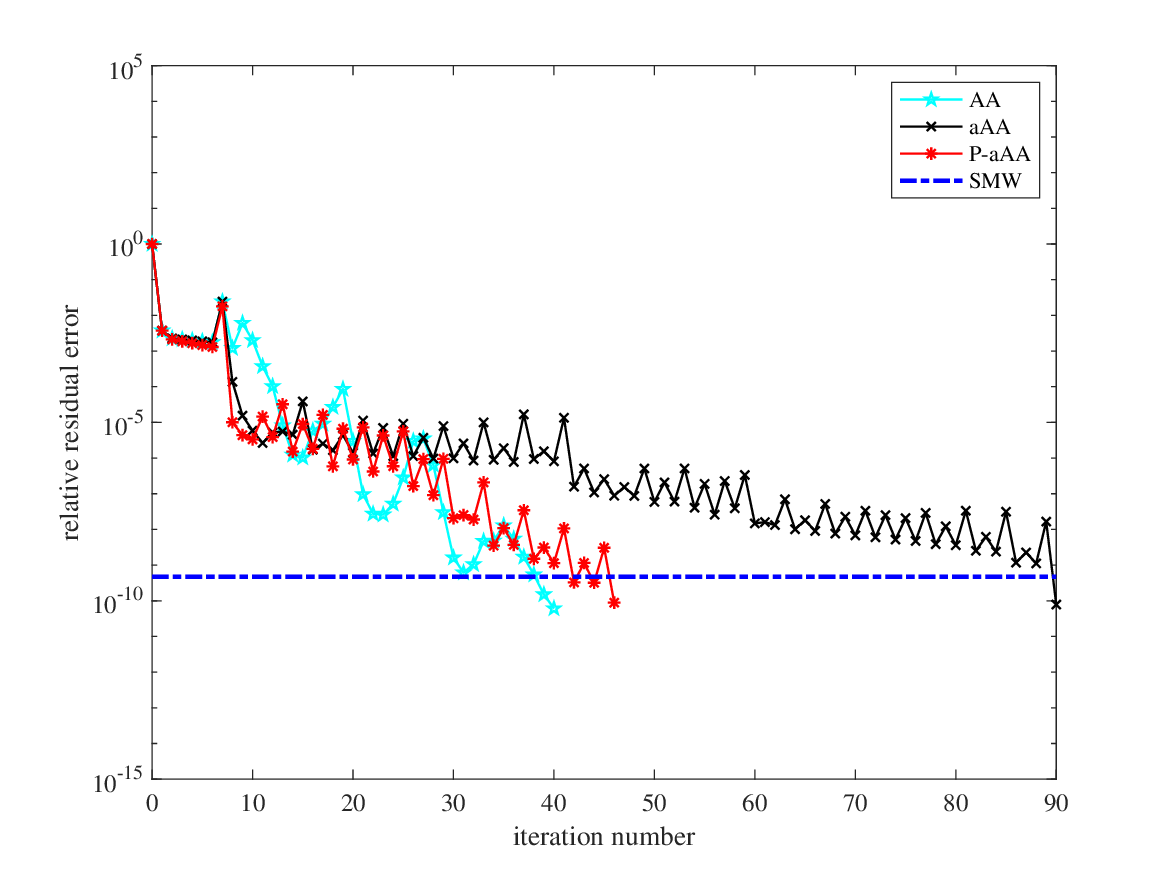}}
}
\caption{The iteration numbers and relative residual error of the proposed methods, FP, Neumann, and GLEK on Example $4.5$.}\label{fig:example4.5}
\end{figure}

\begin{table}[h!]
\centering
\caption{The time requirement (in seconds) of the proposed methods and the SMW method when the columns of $U\in\mathbb{R}^{500\times \ell}$ are chosen by $40$, $50$, and $60$ in Example $4.5$.}
\label{tab:example4.5} 
\resizebox{0.8\textwidth}{!}{
\begin{tabular}{|c|cccccc|}\hline
 & $\alpha$  & $\rho(\calL^{-1}\Pi)$ &  SMW  &   AA & aAA& P-aAA  \\ \hline
$\ell = 40$  & $1.70\times 10^{-4}$  & $0.85$
 & $2.83$ & $0.82$& $0.63$&$\bf 0.61$ \\ \hline
$\ell = 50$  & $1.40\times 10^{-4}$ &  
$0.82$ & $7.94$ & $0.88$ & $0.74$ & $\bf 0.54$ \\ \hline
$\ell = 60$ & $1.25\times 10^{-4}$ & $0.93$ & $28.5$ &$1.48$& $1.80$ & $\bf 1.20$
\\ \hline
\end{tabular}}
\end{table}

\subsection{Application to large scale generalized Lyapunov and Sylvester equations}
In this section, we combine the LRCK method with AA, aAA, and P-aAA to solve large-scale problems.
In the LRCK algorithm, we replace the Neumann method with AA, aAA, or P-aAA to solve the projected generalized Sylvester matrix equation.We choose $AA_{start} = 5$ for AA and $AA_{start} = 1$ for aAA, and P-aAA algorithms.
The inner and outer tolerances of the proposed methods are both set to $10^{-9}$.

\texttt{EXAMPLE $4.6$} This example considers the large-scale case of Example $4.3$. We set 
$\g = 1/6,1/4,1/3$ and $n=50000$. We compare the computational time and the number of iterations of the proposed algorithms with those of the GLEK and LRCK methods.

From Table \ref{tab:example4.6},
we observe that as $\g$ increases, the computational time of GLEK gradually increases, which is consistent with the numerical results reported in [1]. In contrast, P-aAA significantly reduces the computational time compared with GLEK. Moreover, both AA and aAA also achieve faster computation than the GLEK method. On the other hand, the numbers of iterations of AA, aAA, and P-aAA are nearly the same as that of the LRCK method, while P-aAA attains the shortest computational time among all the algorithms. This is confirmed in 
Figure \ref{fig:example4.6} and Table \ref{tab:example4.6}.

\begin{figure}[h!]
\centering{
\subfloat[$\g =  1/6$]{\includegraphics[scale=0.21]{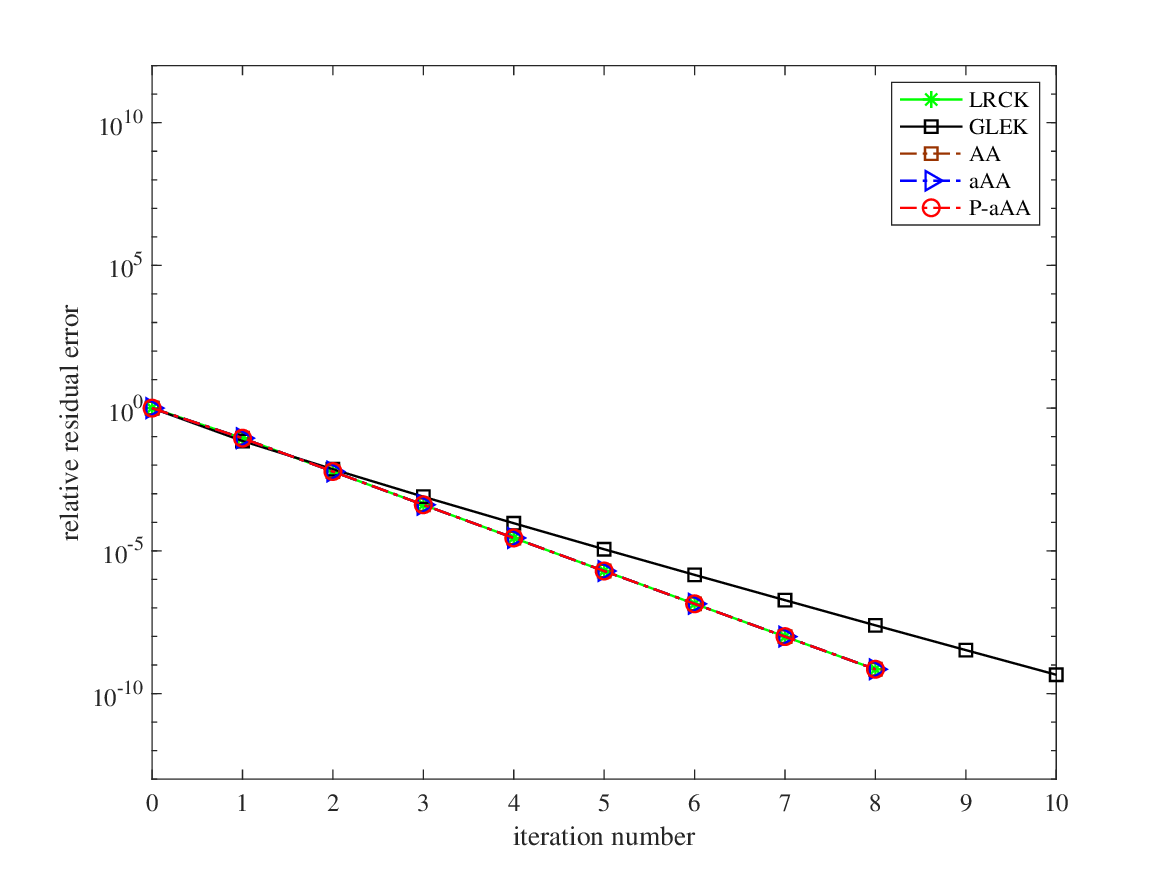}}
\subfloat[$\g = 1/4$]{\includegraphics[scale=0.21]{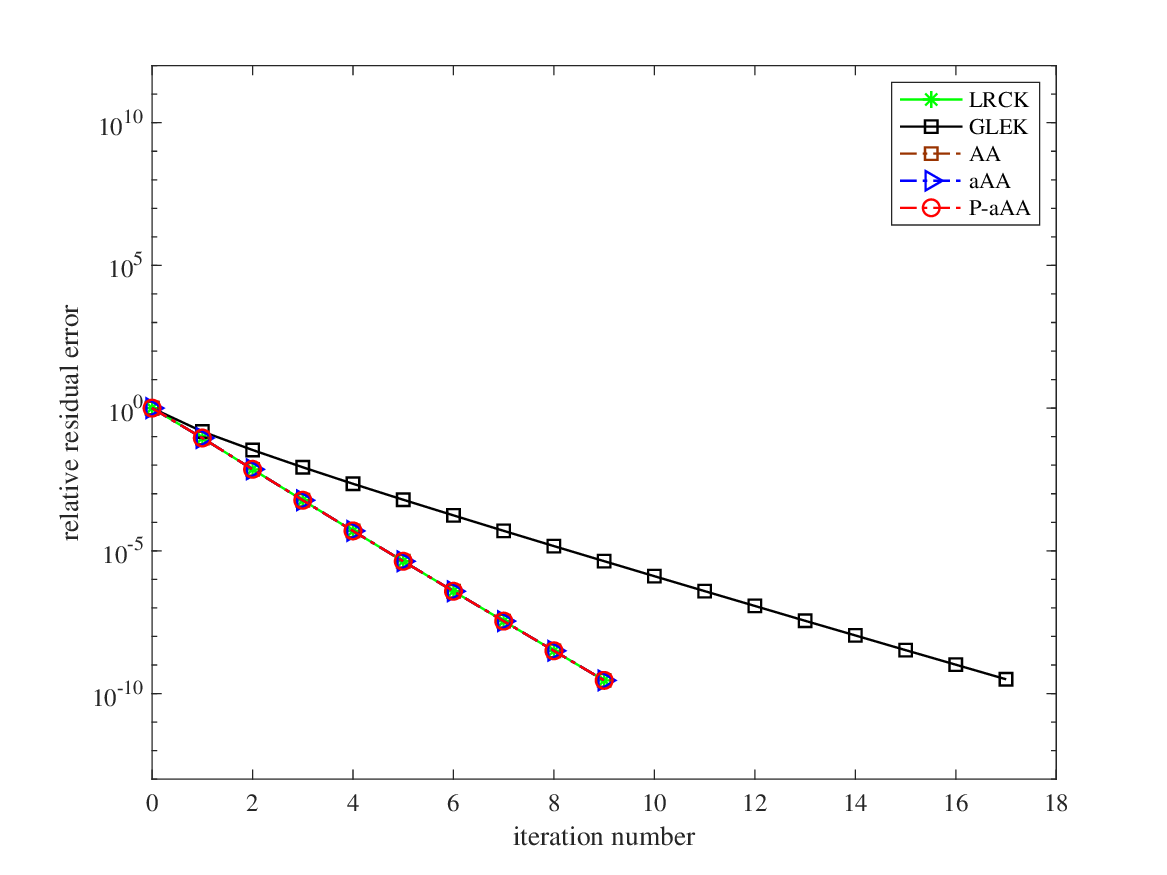}}
\subfloat[$\g = 1/3$]{\includegraphics[scale=0.21]{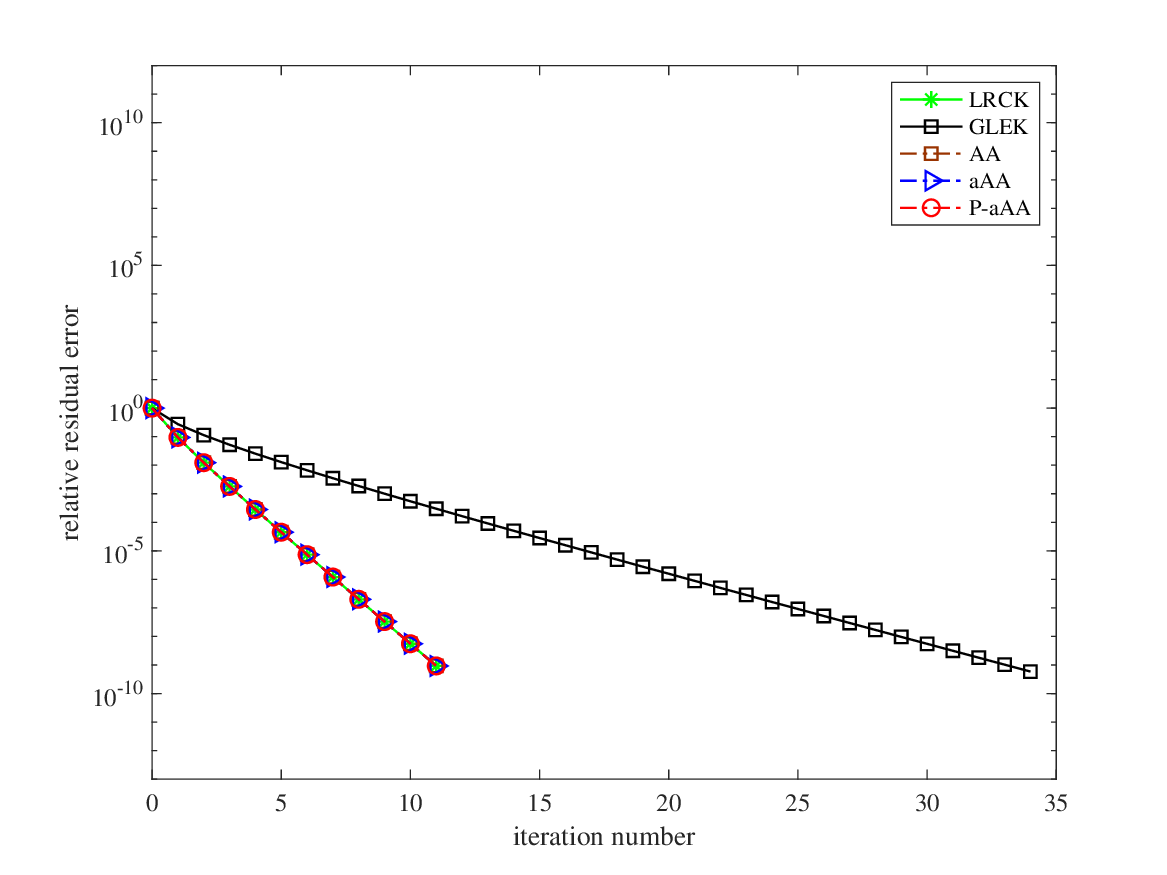}}
}
\caption{The iteration numbers and relative residual error of the proposed methods, GLEK, and LRCK on Examples $4.6$.}\label{fig:example4.6}
\end{figure}

\begin{table}[h!]
\centering
\caption{The time requirement (in seconds) of the proposed methods, GLEK, and LRCK in Example $4.6$.}
\label{tab:example4.6}  
\resizebox{1.0\textwidth}{!}{
\begin{tabular}{|c|ccccc|}\hline
$\g$  &GLEK&  LRCK(Neumann)&   LRCK(AA) & LRCK(aAA)& LRCK(P-aAA)  \\ \hline
$1/6$  & 
$7.83 $ & $1.52$ & $1.47$& $1.41$&$\bf 1.38$ \\ \hline
$1/4$ &  
$21.0 $ & $1.89$ & $2.17$ & $1.74$ & $\bf 1.64$ \\ \hline
$1/3$ & $72.2$ & $2.88$ &$3.55$& $2.63$ & $\bf 2.37$
\\ \hline
\end{tabular}}
\end{table}

\texttt{EXAMPLE $4.7$} The final example is derived from the finite-difference discretization of a partial differential equation. In particular, we consider the inhomogeneous Helmholtz equation
\begin{equation}
\begin{cases}
-\Delta u(x,y) + \kappa(x,y)u(x,y) = f(x,y), & (x,y)\in[0,1]\times\mathbb{R},\\[1mm]
u(0,y)=u(1,y)=0,\\
u(x,y+1)=u(x,y).
\end{cases}
\label{eq:hel}
\end{equation}
From \cite{jarlebring2018}
a finite-difference discretization of \eqref{eq:hel}, using $n$ grid points with $n$ a multiple of four, yields the generalized Sylvester equation
\begin{equation}
AX + XB^{\T} + N_1 X N_1^{\T} = CC^{\T}.
\label{eq:helSy}
\end{equation}
Here, $n = 10000$, 
\[
B = -\texttt{tridiag}(1,-2,1)/h^{2}, \quad
h = 1/(n-1),
\]
\[
A = B - (e_1,e_n)(e_n,e_1)^{\T}/h^{2},
\]
and
\[
N_1 =
\begin{bmatrix}
O_{n/2} & O_{n/2} \\
O_{n/2} & I_{n/2}
\end{bmatrix} \in \mathbb{R}^{n\times n}, \quad
C = [c_1,\dots,c_n]^{\T},
\]
with
\[
c_i =
\begin{cases}
10, & i \in [n/4,n/2],\\
0, & \text{otherwise}.
\end{cases}
\]

Since the matrix $A$ is singular, the equation \eqref{eq:helSy} can be equivalently rewritten as
\begin{equation}
(A+I)X + XB^{\T} + N_1 X N_1^{\T} - X = CC^{\T}.
\label{eq:shiftSy}
\end{equation}

We apply the LRCK(AA), LRCK(aAA), LRCK(P-aAA), and LRCK(Neumann) algorithms to solve \eqref{eq:shiftSy} and compare their computational times and iteration counts. As shown in the \Cref{tab:example4.7}, the LRCK(P-aAA) algorithm achieves the shortest computational time, and its number of iterations is identical to that of LRCK(AA) and LRCK(aAA). Although the LRCK(Neumann) algorithm requires only one fewer iteration than these three methods, their computational times are significantly shorter than that of LRCK(Neumann).

\begin{table}[h!]
\centering
\caption{The time requirement (in seconds) and iteration numbers of the proposed methods and LRCK in Example $4.7$.}
\label{tab:example4.7}  
\resizebox{0.8\textwidth}{!}{
\begin{tabular}{|c|cccc|}\hline
 &LRCK(Neumann)&   LRCK(AA) & LRCK(aAA) & LRCK(P-aAA)  \\ \hline
 Time & 
$8.51 $ & $3.82$& $3.94$&$\bf 3.67$ \\ \hline
 Iteration number&  
$\bf 43$  & $44$ & $44$ & $44$ \\ \hline
\end{tabular}}
\end{table}

\section{Conclusions}
\label{sec:conclusions}

This paper introduced the Preconditioned Alternating Anderson Acceleration (P-aAA) method for solving generalized Sylvester matrix equations. By combining a matrix-oriented Anderson acceleration scheme with a preconditioner constructed from a first-order Neumann series approximation, P-aAA achieves accelerated convergence without increasing computational complexity. We further extended the method to large-scale problems through a Krylov P-aAA framework.

Extensive numerical experiments demonstrated that P-aAA consistently outperforms AA, aAA, Neumann-type iterations, and fixed-point methods, achieving substantial reductions in both computational time and iteration count. The results indicate that P-aAA is an effective and scalable solver for generalized Sylvester matrix equations with general coefficient matrices and challenging spectral properties.

\section*{Acknowledgments}
We thank Professor Kuan Xu of the University of Science and Technology of China for the very helpful discussion in various phases of this project. 
\bibliographystyle{siamplain}
\bibliography{references}
\end{document}